\documentclass[10pt]{article}
\usepackage{graphicx}
\graphicspath{ {images/} }
\usepackage{amsthm}
\usepackage[english]{babel}
\usepackage{graphicx,subfigure}
\usepackage{amsmath,amssymb}
\usepackage{appendix}
\usepackage{color}
\usepackage{xcolor}
\usepackage{soul}
\usepackage{amsmath,amsthm,amsfonts,amssymb,bm}
\usepackage[light]{antpolt}    \usepackage[T1]{fontenc} 
\usepackage{mathrsfs,amssymb,bm,enumerate}
\usepackage{multirow}





\usepackage{ulem}

\usepackage[pagewise,mathlines,switch]{lineno}


\usepackage{amsbsy}
\DeclareMathAlphabet{\mathpzc}{OT1}{pzc}{m}{it}
\pagestyle{plain}
\newcommand{\vertiii}[1]{{\left\vert\kern-0.25ex\left\vert\kern-0.25ex\left\vert #1 
		\right\vert\kern-0.25ex\right\vert\kern-0.25ex\right\vert}}
\headsep 0cm \topmargin 0cm \oddsidemargin 0cm \evensidemargin 0cm
\textwidth 16 truecm \textheight 22 truecm


\usepackage{epsfig}


\numberwithin{equation}{section}
\newtheorem{theorem}{\qquad Theorem}[section]
\newtheorem{lemma}[theorem]{\qquad Lemma}

\newenvironment{remark}
{\pushQED{\qed}\remx}
{\popQED\endremx}

\newcommand{\dd}{\;{\rm d}}

\newcommand{\scal}[1]{\left\langle #1 \right\rangle}

\newcommand{\nx}{\mathscr{X}}

\newcommand{\fff}{\mathfrak{F}}

\newcommand{\norm}[1]{\left\|   #1   \right\|}

\def\ned{\partial_x^{-1}}

\allowdisplaybreaks

\newcommand{\al}{\alpha}
\newcommand{\rr}{\mathbb{R}}
\newcommand{\lt}{ {L^2} }

\DeclareMathOperator{\sech}{sech}





\title{Mathematical properties of  Klein-Gordon-Boussinesq systems 
	\footnotetext{2020 Mathematical subject classification: 76B15, 35B35, 35C08, 65M15}
	\footnotetext{Keywords:  KGB system, Solitary wave, well-posedness}}
\author{A. Dur\' an$^1$\thanks{Corresponding author} \and A. Esfahani$^2$ \and G. M. Muslu$^3$}

\date{
	$^1$Applied Mathematics Department, University of Valladolid, Valladolid, Spain \\ \texttt{{angeldm}@uva.es}\\%
$^2$Department of Mathematics, Nazarbayev University, Astana, Kazakhstan \\ \texttt{amin.esfahani@nu.edu.kz}\\%
$^3$Istanbul Technical University, Department of Mathematics, Istanbul, Turkey\\
\texttt{gulcin@itu.edu.tr} \\[2ex]%
}

\begin{document}
	\maketitle
	\begin{abstract}
	The Klein-Gordon-Boussinesq (KGB) system is proposed in the literature as a model problem to study the validity of approximations in the long wave limit provided by simpler equations such as KdV, nonlinear Schr\"{o}dinger or Whitham equations. In this paper, the KGB system is analyzed as a mathematical model in three specific points. The first one concerns well-posedness of the initial-value problem with the study of local existence and uniqueness of solution and the conditions under which the local solution is global or blows up at finite time. The second point is focused on traveling wave solutions of the KGB system. The existence of different types of solitary waves is derived from two classical approaches, while from their numerical generation several properties of the solitary wave profiles are studied. In addition, the validity of the KdV approximation is analyzed by computational means and from the corresponding KdV soliton solutions.
	\end{abstract}

	
	
	
	
 
	\section{Introduction}
	In this paper, we study the initial-value problem (ivp)
		\begin{equation}\label{kgb}
		\begin{cases}
			u_{tt}=\al^2u_{xx}+u_{ttxx}+(f_1(u,v))_{xx},\\
			v_{tt}=v_{xx}-v+ f_2(u,v), \qquad\qquad x,t\in\rr,\\
			u(x,0)=u_0,\quad u_t(x,0)=u_1,\\
			v(x,0)=v_0,\quad v_t(x,0)=v_1,
		\end{cases}
	\end{equation}
	where
	
	\begin{equation}\label{kgb1}
		f_1(u,v)=a_{uu}u^2+2a_{uv}uv+a_{vv}v^2,\qquad 
		 f_2(u,v)=b_{uu}u^2+2b_{uv}uv+b_{vv}v^2,
	\end{equation} 	
and the coefficients $\alpha$, $a_{uu}$, $a_{uv}$, $a_{vv}$, $b_{uu}$, $b_{uv}$, $b_{vv}$ are real values, with $\alpha\neq 0$ and $a_{\gamma\beta},b_{\gamma\beta}, \beta,\gamma=u,v$, not all zero. Here, $u$ and $v$ are   real-valued functions.
The set of equations \eqref{kgb}, \eqref{kgb1} is called Klein-Gordon-Boussinesq (KGB) system. The KGB system is used to discuss the validity of the approximation given by equations like KdV, nonlinear Schr\"{o}dinger (NLS) or Whitham equations for systems in periodic media, \cite{dss,Schneider,ChongS,BauerCS}. The term validity is referred to the existence of a version of the approximation equation whose solutions can be compared to those of the KGB system, in the sense that the errors between solutions can be bounded over long time intervals. The situation illustrated by the KGB system is of particular interest in a two-fold way. First, because of the use of the method of energy estimates as standard procedure to control the error, \cite{KirrmannSM}, cannot be applied directly; second, because when the alternative of transforming the system, via normal forms, and applying the method of energy estimates to the transformed problem is used, then the analysis of the error requires some non-resonance conditions on the disperion relations of the corresponding linearized system to validating the approximation. The KGB system is taken as a model since it posseses a Fourier mode representation with common properties with a Bloch wave representation of the water wave problem, cf. \cite{BauerCS}. The previous approach was applied to establish, for particular values of \eqref{kgb1}, the validity of the KdV approximation and the Whitham approximation in \cite{ChongS} and \cite{dss}, respectively, while the KdV approximation in the general case \eqref{kgb1} is investigated in 
\cite{Schneider-2020,BauerCS}. The last reference and \cite{Schneider} study the NLS approximation.
%
%

  The system \eqref{kgb} is reminiscent of two well-known equations. The improved Boussinesq equation
\begin{equation}\label{imbq}
    u_{tt}=\al^2u_{xx}+u_{ttxx}+(u^{2})_{xx},
\end{equation}
introduced in \cite{Bog77} as a modification of the Boussinesq equation, \cite{Bous},
\begin{equation}\label{bq}
    u_{tt}=\al^2u_{xx}+u_{xxxx}+(u^{2})_{xx},
\end{equation}
modelling the bi-directional propagation of nonlinear dispersive long waves in shallow water under gravity effects. The modification is based on the equivalence between the linear dispersion relation of (\ref{imbq}) and \eqref{bq} for long waves. Generalizations of \eqref{imbq} of the form
\begin{equation*}\label{imbq2}
    u_{tt}=\al^2u_{xx}+u_{ttxx}+(f(u))_{xx},
\end{equation*}
for some homogeneous nonlinearities $f$ were introduced in, e.~g., \cite{mak}, to describe the propagation of nonlinear
waves in plasma; \cite{clar} to model the evolution of  longitudinal deformation waves in
elastic rods; or \cite{WazWaz05}, to investigate the existence of compact and noncompact physical structures.

The second equation in \eqref{kgb} belongs to the family of nonlinear Klein-Gordon equations 
\begin{equation}\label{kg}
    u_{tt}=u_{xx}+G'(u),
\end{equation}
for some smooth function $G:\mathbb{R}\rightarrow\mathbb{R}$. As a nonlinear generalization of the wave equation, \eqref{kg} appears in the modelling of many research areas, depending on the type of the nonlinear term $G'$. The applications concern quantum field theory, nonlinear optics, and some phenomena in Biology, such as nerve pulse propagation along neuron membranes and the dynamics of scalar fields. We refer to, e.~g. \cite{Scott,Infeld} and references therein for more information on \eqref{kg} and its particular cases such as the sine-Gordon equation.


We make a brief review of some available theoretical results on \eqref{imbq} and \eqref{kg} that are of interest for the purpose of the present paper.
Equation \eqref{imbq}, with $f(u)=\pm|u|^{p-1}u$, is sometimes referred as the Pochhammer-Chree equation, \cite{liu}. This model was first introduced by Pochhammer, \cite{poch}, and in its complete nonlinear form by Chree \cite{chree}. 
  Liu in \cite{liu} showed local and global well-posedness for \eqref{imbq} $H^s\times H^{s+1}$ with $s \geq1$.  He also showed blow up of negative energy
solutions  but with focusing nonlinearities. This model is also
characterized by the existence of (super-luminal) solitary waves of the form $u(x,t)=Q_{c_s}(x-c_st-x_0)$, with $x_0\in\rr$ and $|c_s|>1$, where $Q_c(r)=(c_s^2-1)^{1/(p-1)}Q(\sqrt{(c_s^2-1)/c_s^2}r)$ and 
\[
Q(r)=\left(\frac{p+1}{2}\sech^2\left(\frac{p-1}{2}r\right)\right)^{\frac{1}{p-1}}.
\]
  The stability or instability of these solitons under the flow of \eqref{imbq} remains an important open question. 

On the other hand, for initial data in the corresponding energy space, local and global well-posedness results  for small solutions of \eqref{kg} are well-known (see for example \cite[ Theorem 6.2.2 and Proposition 6.3.3]{cazhar}). See also \cite{del-1,del-2}. Moreover, stability and instability of standing waves of \eqref{kg} were studied in \cite{shatah-1,shatah-2,ss}.

The purpose of the present work is to analyze several mathematical properties of the KGB system which concern \eqref{kgb}, \eqref{kgb1} as a model and that were, to the best of our knowledge, not considered in the literature yet. The main contributions are the following:
\begin{enumerate}
\item Well-posedness of the ivp \eqref{kgb}, \eqref{kgb1} is analyzed. Existence and uniqueness of solutions, locally in time, are established on suitable Sobolev spaces and, for some cases of the coefficients in \eqref{kgb1}, conditions for global existence or blow-up in finite time are determined. This outlines the contents of Section \ref{lwp-section}.
 \item Special solutions of \eqref{kgb}, \eqref{kgb1} are investigated in Section \ref{solit-sec}. More specifically, the paper is focused on the existence of solitary wave solutions. The classical approaches based on Normal Form Theory, \cite{IK,Champ,ChampS,ChampT}, and Positive Operator Theory, \cite{BenjaminBB1990,BonaCh2002}, are here used to derive the conditions for the existence of solitary waves of three types: Classical Solitary Waves (CSW), with monotone and nonmonotone decay, and Generalized Solitary Waves (GSW). The numerical generation of the solitary-wave profiles is accurately performed by using Petviashvili's method, \cite{Petv1976},  which may include extrapolation techniques  to accelerate the convergence, \cite{sidi1}. The numerical procedure is described in detail in Appendix \ref{appA}.
\item The validity of a long wave KdV approximation for \eqref{kgb}, \eqref{kgb1}, studied in \cite{BauerCS,Schneider-2020} (and in \cite{ChongS} for particular values of the coefficients in \eqref{kgb1})  is considered in section \ref{kdv_app}. The form of the associated KdV equation is derived and the approximation theorem proved in \cite{BauerCS,ChongS} is investigated numerically from the KdV soliton solution.
\end{enumerate}
The following notation will be used throughout the paper. For real $s$, $H^{s}=H^{s}(\mathbb{R})$ stands for the $L^{2}$-based Sobolev space over $\mathbb{R}$, with norm $||\cdot ||_{H^{s}}$, and $\dot{H}^{s}$ denotes the corresponding homogeneous Sobolev space with norm $||\cdot||_{\dot{H}^{s}}$. For $X=X_{1}\times X_{2}$ a Cartesian product of Sobolev spaces, we will consider the norm
$$||h||_{X}=||h_{1}||_{X_{1}}+||h_{2}||_{X_{2}},\quad h=(h_{1},h_{2})\in X.$$· 
In addition, for $T>0, m\geq 0$, $C_{T}^{m}(X)=C^{m}([0,T),X)$ will denote the space of $m$th-order continuously differentiable functions $h:[0,T)\rightarrow X$. The norm in $C_{T}^{0}(X)=C_{T}(X)$ given by
$$||h||_{C_{T}(X)}={\rm sup}_{0\leq t<T}||h(t)||_{X},$$ will also be used.

 \section{Well-posedness}\label{lwp-section}

In this section, we investigate the existence of local solutions of \eqref{kgb}  and find the conditions under which these solutions are global or blow up in finite time.  The first point will be studied through the application of the classical Contraction Mapping Theorem in suitable spaces. A first step will require the analysis of the ivp for the linearized equations, 
\begin{equation}\label{lkgb}
		\begin{cases}
			u_{tt}=\al^2u_{xx}+u_{ttxx},\\
			v_{tt}=v_{xx}-v, \qquad\qquad x,t\in\rr,
		\end{cases}
	\end{equation}
Note that since the dispersion in the first equation \eqref{kgb} is weak, one cannot expect to derive Strichartz estimates for this equation. On the other hand, the second equation involves Klein-Gordon dispersion, and the dispersive estimates associated with the Klein-Gordon group are well-known (see \cite{gv, NO}). Let $t\mapsto U(t), V(t)$ be the corresponding linear groups which, from the Fourier method applied to \eqref{lkgb}, have Fourier symbols
\begin{equation}\label{fourier1}
\widehat{U(t)g}(\xi)=\sin\left(\frac{t|\alpha\xi|}{\sqrt{1+\xi^2}}\right)\frac{\sqrt{1+\xi^2}}{|\alpha\xi|}\widehat{g}(\xi),\quad 
\widehat{V(t)g}(\xi)=\frac{\sin\left(t \sqrt{1+\xi^2} \right)}{\sqrt{1+\xi^2}}\widehat{g}(\xi),\quad \xi\in\mathbb{R},
\end{equation}
are time differentiable and satisfy
\begin{equation}\label{fourier2}
\widehat{\partial_{t}{U}(t)g}(\xi)=\cos\left(\frac{t|\alpha\xi|}{\sqrt{1+\xi^2}}\right)\widehat{g}(\xi),\quad 
\widehat{\partial_{t}{V}(t)g}(\xi)=\cos  \left(t \sqrt{1+\xi^2} \right)\widehat{g}(\xi),\quad \xi\in\mathbb{R},
\end{equation}
(where $\widehat{g}(\xi)$ denotes the Fourier transform of $g\in L^{2}(\mathbb{R})$ at $\xi$), and, for $t>0, r,s\geq 0$, the estimates, \cite{gv, NO,liu}
\begin{eqnarray}\label{fourier3}
&&\|U(t)f\|_{H^r}\leq\|f\|_{H^r},\quad \|\partial_{t}U(t)f\|_{H^r}\leq\|f\|_{H^r},\\
&&\|V(t)f\|_{H^s}\leq\|f\|_{H^{s-1}},\quad \|\partial_{t}V(t)f\|_{H^s}\leq\|f\|_{H^{s}}.
\end{eqnarray}
 \begin{theorem}\label{local}
 	Let   $s,r\geq 0$ satisfying $1/2<r\leq s\leq r+1$, $(u_0,u_1)\in X^{r}:=H^r\times  (H^{r}\cap\dot{H}^{r-1})$, $(v_0,v_1)\in X^{s}:=H^{s}\times H^{s-1}$
 	Then there exist $T>0$, depending only on the norms $||(u_{0},u_{1})||_{X^{r}}, ||(v_{0},v_{1})||_{X^{s}}$, and a unique solution $(u,v)$ of \eqref{kgb} with $(u,u_{t})\in C^{1}_{T}(X^{r}), (v.v_{t})\in C_{T}^{1}(X^{s})$.
 \end{theorem}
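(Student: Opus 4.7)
The plan is to rewrite \eqref{kgb} as a coupled system of integral equations via Duhamel's formula and solve it by a Banach fixed-point argument in a closed ball of $Y_T:=\{(u,v):(u,u_t)\in C^1_T(X^r),\ (v,v_t)\in C^1_T(X^s)\}$ for $T>0$ small. Rewriting the first equation in \eqref{kgb} as $(1-\partial_x^2)u_{tt}=\alpha^2 u_{xx}+\partial_x^2 f_1(u,v)$, so that $u_{tt}=\nm(\alpha^2 u+f_1(u,v))$ with $\nm:=(1-\partial_x^2)^{-1}\partial_x^2$, the Duhamel formulas suggested by the groups $U(t),V(t)$ in \eqref{fourier1}-\eqref{fourier2} read
\begin{align*}
u(t)&=\partial_t U(t)u_0+U(t)u_1+\int_0^t U(t-\tau)\,\nm f_1(u,v)(\tau)\dd\tau,\\
v(t)&=\partial_t V(t)v_0+V(t)v_1+\int_0^t V(t-\tau)\,f_2(u,v)(\tau)\dd\tau.
\end{align*}
I would denote by $\Phi(u,v)=(\widetilde u,\widetilde v)$ the map defined by the right-hand sides and look for a fixed point in a closed ball $B_R\subset Y_T$ whose radius $R$ is twice the norm of the linear part.

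The two central ingredients are, on the one hand, the linear estimates \eqref{fourier3} together with the bounds for $\partial_{tt}U$ and $\partial_{tt}V$ that follow directly from \eqref{fourier1}-\eqref{fourier2} (their symbols, $-\sin(\cdot)|\alpha\xi|/\sqrt{1+\xi^2}$ and $-\sin(\cdot)\sqrt{1+\xi^2}$, yield $\|\partial_{tt}U(t)f\|_{H^r}\lesssim\|f\|_{H^r}$ and $\|\partial_{tt}V(t)f\|_{H^{s-1}}\lesssim\|f\|_{H^s}$), and the fact that $\nm$, being a Fourier multiplier with bounded symbol $-\xi^2/(1+\xi^2)$, is bounded on every $H^r$. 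On the other hand, since $r>1/2$ the Sobolev space $H^r$ is a Banach algebra, and the hypothesis $r\leq s\leq r+1$ yields the embeddings $H^s\hookrightarrow H^r$ and $H^r\hookrightarrow H^{s-1}$. Combining these, the quadratic structure of \eqref{kgb1} gives
\[
\|f_1(u,v)\|_{H^r}+\|f_2(u,v)\|_{H^{s-1}}\leq C\bigl(\|u\|_{H^r}^2+\|u\|_{H^r}\|v\|_{H^s}+\|v\|_{H^s}^2\bigr),
\]
which, plugged back into the Duhamel identities (using the one-derivative gain of $V(t)$ built into \eqref{fourier3} to absorb the $s-1\to s$ regularity jump), produces estimates of the form
\[
\|\Phi(u,v)\|_{Y_T}\leq C_0\bigl(\|(u_0,u_1)\|_{X^r}+\|(v_0,v_1)\|_{X^s}\bigr)+C_1\,T\,\|(u,v)\|_{Y_T}^2,
\]
and, by the bilinear structure of $f_1,f_2$, the analogous Lipschitz bound
\[
\|\Phi(u,v)-\Phi(\widetilde u,\widetilde v)\|_{Y_T}\leq C_1\,T\,R\,\|(u,v)-(\widetilde u,\widetilde v)\|_{Y_T}.
\]
Choosing $T$ small enough renders $\Phi$ a contraction on $B_R$; the fixed point is the sought solution, and uniqueness in all of $Y_T$ (not merely in $B_R$) follows by a standard absorption argument applied on successive overlapping subintervals using the same Lipschitz bound.

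The main technical obstacle, in my view, is the careful bookkeeping of the two coupled scales of regularity constrained by $1/2<r\leq s\leq r+1$: one has to verify that every nonlinear product can be estimated \emph{simultaneously} in $H^r$ (for the $u$-equation) and in $H^{s-1}$ (for the $v$-equation), and that the time derivatives $u_t,v_t$ of the fixed point indeed lie in the claimed spaces, which requires differentiating the Duhamel integrals in $t$ and reapplying the linear estimates to $\partial_t U$, $\partial_t V$, $\partial_{tt}U$, and $\partial_{tt}V$. The low-frequency assumption $u_1\in\dot H^{r-1}$ is precisely what is needed at this stage to tame the factor $\sqrt{1+\xi^2}/|\alpha\xi|$ appearing in the symbol of $U(t)$ near $\xi=0$ so that $U(t)u_1$ has the stated $H^r$-regularity on the whole interval $[0,T)$. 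Once these analytic details are handled, the Banach fixed-point mechanism closes routinely, and continuous dependence on the initial data is an immediate byproduct of the same Lipschitz estimate.
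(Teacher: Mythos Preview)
Your proposal is correct and essentially coincides with the paper's proof: the same Duhamel representation via $U(t),V(t)$, the same linear bounds, the same product estimates in $H^r$ and $H^{s-1}$ (the paper phrases the mixed ones via the Sobolev multiplication law rather than your embeddings $H^s\hookrightarrow H^r\hookrightarrow H^{s-1}$, but under $1/2<r\le s\le r+1$ the content is identical), and the contraction mapping. One small correction to your side remark: the hypothesis $u_1\in\dot H^{r-1}$ is not what puts $U(t)u_1$ into $H^r$ (the symbol $\sin(t\omega)/\omega$ with $\omega=|\alpha\xi|/\sqrt{1+\xi^2}$ is already bounded by $t$), but is instead what ensures $u_t\in\dot H^{r-1}$, since the linear part of $u_t$ contains $\partial_tU(t)u_1$ with symbol $\cos(t\omega)$, which provides no low-frequency decay.
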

\begin{proof}
Using the operators \eqref{fourier1} and Duhamel's principle, \eqref{kgb} can be written in the integral form
\begin{equation}\label{integralform}
	 	\begin{split}
 		u(x,t)&=\partial_tU(t)u_0+U(t)u_1+
 		\int_0^tU(t-t'){ \partial_x^2}{(I-\partial_x^2})^{-1}f_1(u,v)\dd t',\\ 
 		v(x,t)&=\partial_tV(t)v_0+V(t)v_1+
 		\int_0^tV(t-t') f_2(u,v)\dd t'. 
 	\end{split}
 \end{equation}
Let $T>0$ to be specified later. From \eqref{fourier2}, \eqref{fourier3}, the standard estimate
$$||{ \partial_x^2}{(I-\partial_x^2})^{-1}g||_{H^{r}}\lesssim ||g||_{H^{r}},$$ and \eqref{integralform}, it holds that
\begin{eqnarray}
||(u,u_{t})||_{C_{T}(X^{r})}&\lesssim &||(u_{0},u_{1})||_{X^{r}}+T{\sup}_{0\leq t\leq T} ||f_1||_{H^{r}},\label{estimate1}\\
||(v,v_{t})||_{C_{T}(X^{s})}&\lesssim& ||(v_{0},v_{1})||_{X^{s}}+T{\sup}_{0\leq t\leq T} \|f_2\|_{H^{s-1}}.\label{estimate1b}
 	\end{eqnarray}
Following \cite[Theorem 1]{HSS} (see also \cite[Theorem 2.1]{liu}  and   \cite{gv}), it is enough to control the nonlinear terms $f_1$ and $f_2$ in order to apply the fixed point argument for the Contraction Mapping Theorem in $C_{T}(X^{r})\times C_{T}(X^{s})$. Under the hypotheses on $r$ and $s$, $H^{r}$ and $H^{s}$ are Banach algebras and therefore
\begin{equation*}
\|u^2\|_{H^{r}}\lesssim 
 	\|u \|_{H^{r}}^2 ,\quad \|v^2\|_{H^{s-1}} \lesssim 
 	\| v\|_{H^{s}}^2.
\end{equation*}
In addition, from the Sobolev multiplication law (\cite[Corollary 3.16]{tao}), we obtain
 	\begin{equation}\label{estimate2}
 		\begin{split}
 			&\|uv\|_{H^{r}}\lesssim 
 			\|u \|_{H^{r}} \|v \|_{H^{s}},\quad 
 			\|v^2\|_{H^{r}}\lesssim 
 			\|v \|_{H^{s}}^2,
 			\\
 			&\|u^2\|_{H^{s-1}}\lesssim
 			\|u \|_{H^{r}}^2,\quad 
 	\| uv\|_{H^{s-1}}\lesssim
 			\|u \|_{H^{r}} \|v \|_{H^{s}}.
 		\end{split}
 	\end{equation}
Using \eqref{estimate2} in \eqref{estimate1}, \eqref{estimate1b}, a standard application of the Contraction Mapping Theorem determines some $T>0$ and the existence of a unique solution $(u,v)$ of \eqref{integralform} with  $(u,u_{t})\in C_{T}(X^{r}), (v.v_{t})\in C_{T}(X^{s})$. From \eqref{integralform} again, it is clear that actually $(u,u_{t})\in C^{1}_{T}(X^{r}), (v.v_{t})\in C_{T}^{1}(X^{s})$.
\end{proof}

 A second observation is concerned with the conserved quantities of \eqref{kgb}. A direct computation proves the following result.
 
 \begin{theorem}\label{hamiltonian}
 	Assume that 
 	\begin{equation}\label{ham1}
 		b_{uu}=-a_{uv}=:B,\quad b_{uv}=-a_{vv}=:C,
 	\end{equation}
 	and that \eqref{kgb} admits a unique solution  $(u,u_{t})\in C^{1}_{T}(X^{0}), (v.v_{t})\in C_{T}^{1}(X^{1})$. Let
 	\begin{equation} \label{energyc}   E(t)=\frac12\int_\rr\left(u_t^2+\al^2u^2+ (\partial_x^{-1}u_t)^2+v^2+v_x^2+v_t^2\right)\dd x
 		-\int_\rr\fff(u,v)\dd x,
 	\end{equation}
 	\begin{equation} \label{inva}
 		F(t)=\int_\rr \left(u\ned u_t+u_xu_t+v_xv_t\right)\dd x,
 	\end{equation}
 	where
 	\[ 
 	\fff(u,v)=-\frac{a_{uu}}{3}u^3+\frac{b_{vv}}{3}v^3+Bu^2v+Cuv^2.
 	\]
 	Then the functionals \eqref{energyc} and \eqref{inva} are conserved  for $t\in[0,T)$.
 \end{theorem}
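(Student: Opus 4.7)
My approach is to differentiate $E(t)$ and $F(t)$ directly in time, use the equations \eqref{kgb} to eliminate $u_{tt}$ and $v_{tt}$, and then show that the resulting bulk terms integrate to zero under the Hamiltonian coupling \eqref{ham1}. The algebraic key, which I would verify first by a direct comparison of quadratic forms, is that \eqref{ham1} is exactly equivalent to
\begin{equation*}
f_1(u,v)=-\fff_u(u,v),\qquad f_2(u,v)=\fff_v(u,v),
\end{equation*}
so that $\fff$ serves as a joint potential for the two nonlinearities.

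For $E(t)$, I would test the first equation in \eqref{kgb} against the multiplier $\partial_x^{-2}u_t$ (admissible because $\partial_x^{-1}u_t\in L^2$ by the assumed regularity) and the second equation against $v_t$. Using the basic identities
\begin{equation*}
\int_\rr u_{tt}\,\partial_x^{-2}u_t\dd x=-\frac{d}{dt}\frac{1}{2}\int_\rr(\partial_x^{-1}u_t)^{2}\dd x,\qquad \int_\rr u_{ttxx}\,\partial_x^{-2}u_t\dd x=\frac{d}{dt}\frac{1}{2}\int_\rr u_t^{2}\dd x,
\end{equation*}
together with integration by parts on the terms $\alpha^{2}u_{xx}$ and $(f_1)_{xx}$, the first test produces
\begin{equation*}
\frac{d}{dt}\frac{1}{2}\int_\rr\left(u_t^{2}+\alpha^{2}u^{2}+(\partial_x^{-1}u_t)^{2}\right)\dd x=-\int_\rr f_1\,u_t\dd x,
\end{equation*}
and the second gives $\frac{d}{dt}\frac{1}{2}\int_\rr(v^{2}+v_x^{2}+v_t^{2})\dd x=\int_\rr f_2\,v_t\dd x$. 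Combining these two identities with the chain rule $\frac{d}{dt}\int_\rr\fff(u,v)\dd x=\int_\rr(\fff_u u_t+\fff_v v_t)\dd x$ and the algebraic relations above produces a complete cancellation, so $\frac{dE}{dt}\equiv 0$.

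For $F(t)$, direct differentiation yields six terms, three of which vanish by symmetry: $\int_\rr u_t\partial_x^{-1}u_t\dd x=0$ by skew-adjointness of $\partial_x^{-1}$, while $\int_\rr u_{xt}u_t\dd x=\int_\rr v_{xt}v_t\dd x=0$ as integrals of total $x$-derivatives of squares. For the remaining three terms $\int_\rr u\,\partial_x^{-1}u_{tt}\dd x+\int_\rr u_xu_{tt}\dd x+\int_\rr v_xv_{tt}\dd x$, I would first apply $\partial_x^{-1}$ to the Boussinesq equation to obtain $\partial_x^{-1}u_{tt}=u_{ttx}+\alpha^{2}u_x+(f_1)_x$; substituting and integrating the $\int u\,u_{ttx}\dd x$ piece by parts collapses the first two terms to $-\int_\rr u_xf_1\dd x$ (the linear $uu_x$ contribution being a total derivative). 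The Klein-Gordon equation similarly reduces the third term to $\int_\rr v_xf_2\dd x$ (the $v_xv_{xx}$ and $vv_x$ contributions being total derivatives). Expanding the quadratic nonlinearities and collecting yields
\begin{equation*}
\frac{dF}{dt}=(a_{uv}+b_{uu})\int_\rr u^{2}v_x\dd x-(a_{vv}+b_{uv})\int_\rr u_xv^{2}\dd x,
\end{equation*}
which vanishes identically under \eqref{ham1}.

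The main difficulty I anticipate is not the algebra but the rigorous justification of the integration by parts and duality pairings at the minimal regularity declared in the statement; the delicate terms are those involving $\partial_x^{-1}$ (for $E$) and the cross-regularity integral $\int_\rr u_xu_t\dd x$ appearing in the definition of $F$. The standard remedy is to regularize the initial data, for instance by Schwartz-class approximation, prove both identities for the resulting classical solutions where every manipulation is elementary, and pass to the limit using the continuous dependence inherited from Theorem \ref{local} together with the continuity of the functionals $E$ and $F$ on the relevant Sobolev spaces.
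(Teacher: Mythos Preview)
Your proof is correct and is precisely the ``direct computation'' that the paper invokes without giving any details; the paper omits the argument entirely. Your identification of the algebraic key $f_1=-\fff_u$, $f_2=\fff_v$, your choice of multipliers $\partial_x^{-2}u_t$ and $v_t$ for $E$, and your handling of $F$ via $\partial_x^{-1}u_{tt}=u_{ttx}+\alpha^{2}u_x+(f_1)_x$ all check out, and your final expression $\frac{dF}{dt}=(a_{uv}+b_{uu})\int u^{2}v_x\,\dd x-(a_{vv}+b_{uv})\int u_xv^{2}\,\dd x$ makes transparent why exactly the conditions \eqref{ham1} are needed. Your remark on the regularity needed to justify pairings such as $\int u_xu_t\,\dd x$ at the $X^{0}\times X^{1}$ level, and the density/continuous-dependence cure, is a point the paper does not address.
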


\begin{remark}
 	We observe that, for an initial data $(u_0,u_1)\in X^{0}$, $(v_{0},v_{1})\in X^{1})$, the ivp \eqref{kgb} is equivalent to 
 	\begin{equation}\label{kgb-equiv}
 		\begin{cases}
 			u_{t}=w_{x},\\
 			(I-\partial_x^2)w_t=(\al^2u +f_1(u,v))_{x},\\
 			v_t=z,\\
 			z_t=v_{xx}-v+ f_2(u,v) \\
 			u(x,0)=u_0,\quad w(x,0)=\partial_x^{-1}u_1,\\
 			v(x,0)=v_0,\quad z(x,0)=v_1.
 		\end{cases}
 	\end{equation}
 	in the sense that $(u,v)$ with $(u,u_{t})\in C_{T}(X^{0}), (v.v_{t})\in C_{T}(X^{1})$  is a solution of \eqref{kgb} if, and only
 	if, $(u,w,v,z)\in C_{T}(L^{2}\times H^{1}\times X^{1})$ is a solution of \eqref{kgb-equiv}.  (Note here that $\partial_x^{-1}$is an invertible operator
 	from $H^1$ to $L^2\cap\dot{H}^{-1}$.) 
 	Then the  energy space associated to \eqref{kgb-equiv} is $\nx=L^2\times H^1\times X^{1}$, and, therefore,  the energy space associated with \eqref{kgb} is $X^{0}\times X^{1}$. The equivalence enables to establish an alternative proof of Theorem \ref{local} from \eqref{kgb-equiv}, cf. \cite{liu}. On the other hand, in terms of \eqref{kgb-equiv}, the invariants \eqref{energyc} and \eqref{inva} are written as
 	\[
 	E(t)=\frac12\int_\rr\left(\al^2u^2+ w^2+w_x^2+v^2+v_x^2+z^2\right)\dd x
 	-\int_\rr\fff(u,v)\dd x ,
 	\]
 	\[
 	F(t)=\int_\rr \left(uw+u_xw_x+v_xz\right)\dd x ,
 	\]
 	In addition, $E$ is the Hamiltonian function of the Hamiltonian structure of \eqref{kgb-equiv}
 	\[
 	\vec U=JE'(\vec U),\qquad \vec U=\begin{pmatrix}
 		u\\w\\v\\z
 	\end{pmatrix},\quad
 	J=\begin{pmatrix}
 		0&(I-\partial_x^2)^{-1}\partial_x&0&0\\
 		(I-\partial_x^2)^{-1}\partial_x&0&0&0\\
 		0&0&0&1\\
 		0&0&-1&0
 	\end{pmatrix},
 	\]
 	where $E'$ denotes the variational derivative.
 \end{remark}

The next theorem gives some conditions under which the solutions is global in the energy space. The proof requires the following auxiliary result.
	 \begin{lemma}\label{lem-gb}
 	Let $s>1$.	Assume that $y=y(t)$ is a continuous
 	function
 	satisfying 
 	\[
 	0\leq y(t)\leq C_1+C_2 (y(t))^s, 
 	\]
 	for all $t\geq0$ and for some $C_1,C_2>0$ such that $C_1<\frac{s-1}{s}(sC_2)^{\frac{1}{1-s}}$. Then, there are $a_{1}, a_{2}, A$ with $0<a_1<A<a_2<\infty$,  such that $0\leq y(t)\leq a_1$ if $y(0)<A$, and $a_2\leq y(t)$ if $y(0)>A$ for all $t\geq0$, where $A=(sC_2)^{\frac{1}{1-s}}$.
 \end{lemma}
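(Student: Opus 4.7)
The plan is to recast the inequality in terms of a one-variable real function and do a soft topological argument based on continuity of $y$ and the intermediate value theorem, rather than any differential or integral technique.

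First I would introduce the auxiliary function $h:[0,\infty)\to\mathbb{R}$ defined by $h(x) = x - C_2 x^s$, so that the hypothesis $y(t)\leq C_1 + C_2 y(t)^s$ is equivalent to $h(y(t))\leq C_1$ for all $t\geq 0$. Since $s>1$, we have $h(0)=0$, $h(x)\to -\infty$ as $x\to\infty$, and $h'(x)=1-sC_2 x^{s-1}$ vanishes exactly at $x=A:=(sC_2)^{1/(1-s)}$. Thus $h$ is strictly increasing on $[0,A]$ and strictly decreasing on $[A,\infty)$, so it attains its maximum at $A$. A short computation, using $C_2 A^{s-1} = 1/s$, gives
\begin{equation*}
h(A) = A - C_2 A^s = \frac{s-1}{s}(sC_2)^{1/(1-s)},
\end{equation*}
which is precisely the threshold appearing in the statement.

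Under the assumption $C_1 < h(A)$, the equation $h(x)=C_1$ has exactly two solutions $0<a_1<A<a_2<\infty$, and the sublevel set $\{x\geq 0 : h(x)\leq C_1\}$ is the disjoint union $[0,a_1]\cup[a_2,\infty)$. The inequality $h(y(t))\leq C_1$ therefore forces $y(t)\in[0,a_1]\cup[a_2,\infty)$ for every $t\geq 0$.

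The last step is the topological argument. Since $y$ is continuous and its image avoids the open interval $(a_1,a_2)$, connectedness of $[0,\infty)$ implies that the image lies entirely in one of the two components $[0,a_1]$ or $[a_2,\infty)$. If $y(0)<A$ then $y(0)\leq a_1$ (as $y(0)\geq a_2>A$ is excluded), and so $y(t)\leq a_1$ for all $t\geq 0$; if $y(0)>A$ then $y(0)\geq a_2$, and so $y(t)\geq a_2$ for all $t\geq 0$.

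There is no real obstacle here; the only delicate point is to verify carefully that $h(A)$ equals the constant $\frac{s-1}{s}(sC_2)^{1/(1-s)}$ that appears in the hypothesis, since this is what guarantees the existence and separation of the two roots $a_1<A<a_2$, and hence the gap that the continuous trajectory of $y$ cannot cross.
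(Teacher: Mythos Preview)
Your proof is correct and complete. Note, however, that the paper does not actually supply a proof of this lemma: it is stated as an auxiliary result and used immediately in the proof of Theorem~\ref{global-1}, so there is no ``paper's own proof'' to compare against. Your argument---analyzing the single-variable function $h(x)=x-C_2 x^s$, locating its unique maximum at $A=(sC_2)^{1/(1-s)}$, checking that $h(A)=\frac{s-1}{s}(sC_2)^{1/(1-s)}$ so that the sublevel set $\{h\leq C_1\}$ splits into two disjoint intervals, and then invoking continuity of $y$ to trap the trajectory in one component---is exactly the standard proof of this kind of bootstrap lemma and would be a suitable insertion where the paper omits it.
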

	
 \begin{theorem}\label{global-1}
Consider the ivp \eqref{kgb} with $a_{uu}=0$. Under the conditions of of Theorem \ref{local} and assuming \eqref{ham1}, 
 	there exists $K_0>0$ such that if
 	\begin{equation}\label{global-c}
 		E(0)<\frac{1}{6}K_0^{-6},\qquad \|u_1 \|_\lt^2 + \al^2\| u_0\|_\lt^2+\|v_0\|_\lt^2+\|v_1\|_\lt^2+\|(v_0)_x\|_\lt^2<K_0^{-3},
 	\end{equation} 
 	then the maximal local time of
 	existence $T>0$ in Theorem \ref{local} can be extended to $+\infty$.
	\end{theorem}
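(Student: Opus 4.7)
The strategy is to exploit the conservation of the Hamiltonian $E$ from Theorem \ref{hamiltonian} to produce a uniform \emph{a priori} bound on the energy norm of the solution on the maximal interval $[0,T^{*})$, and then invoke the continuation criterion implicit in the fixed-point proof of Theorem \ref{local} to push $T^{*}$ to $+\infty$. With $a_{uu}=0$, the density reduces to $\fff(u,v)=\tfrac{b_{vv}}{3}v^3+Bu^2v+Cuv^2$, which is purely cubic. Introducing
\[
\Phi(t)=\|u_t\|_{L^2}^2+\al^2\|u\|_{L^2}^2+\|\partial_x^{-1}u_t\|_{L^2}^2+\|v\|_{L^2}^2+\|v_x\|_{L^2}^2+\|v_t\|_{L^2}^2,
\]
which is twice the quadratic part of $E$ and is equivalent to the squared energy-space norm of $(u,u_t,v,v_t)\in X^{0}\times X^{1}$, the conservation law $E(t)=E(0)$ rewrites as $\Phi(t)=2E(0)+2\int_{\rr}\fff(u,v)\dd x$ on $[0,T^{*})$.

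The key step is to bound the cubic density by a power of $\Phi$. Using the one-dimensional Sobolev embedding $H^1(\rr)\hookrightarrow L^{\infty}(\rr)\cap L^{p}(\rr)$ for every $p\in[2,\infty]$ and H\"older's inequality,
\[
\left|\int_{\rr}v^3\dd x\right|\lesssim \|v\|_{H^1}^3,\quad \left|\int_{\rr}u^2v\dd x\right|\leq \|v\|_{L^\infty}\|u\|_{L^2}^2\lesssim \|v\|_{H^1}\|u\|_{L^2}^2,\quad \left|\int_{\rr}uv^2\dd x\right|\leq \|u\|_{L^2}\|v\|_{L^4}^2\lesssim \|u\|_{L^2}\|v\|_{H^1}^2,
\]
so that $|\int_{\rr}\fff(u,v)\dd x|\leq K\,\Phi(t)^{3/2}$ for a constant $K>0$ depending only on $\al,B,C,b_{vv}$. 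Substituting into the energy identity yields the scalar inequality
\[
\Phi(t)\leq 2|E(0)|+2K\,\Phi(t)^{3/2},\qquad t\in[0,T^{*}).
\]

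This places us precisely in the setting of Lemma \ref{lem-gb} with $y=\Phi$, $s=\tfrac32$, $C_1=2|E(0)|$ and $C_2=2K$, for which a direct computation gives $A=(sC_2)^{1/(1-s)}=(3K)^{-2}$ and $\tfrac{s-1}{s}A=\tfrac13(3K)^{-2}$. Choosing $K_0$ so that the Sobolev constant $K$ is absorbed and so that the smallness conditions \eqref{global-c} translate precisely into the two abstract hypotheses $C_1<\tfrac{s-1}{s}A$ and $\Phi(0)<A$, Lemma \ref{lem-gb} supplies a time-independent bound $\Phi(t)\leq a_1$ on $[0,T^{*})$. Since the existence time in Theorem \ref{local} depends only on the energy norm of the data, a standard blow-up alternative argument then rules out $T^{*}<\infty$, giving $T^{*}=+\infty$.

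The main technical obstacle is not any single estimate but the careful matching of constants: one must verify that the Sobolev constant $K$, the threshold $(sC_2)^{1/(1-s)}$, and the factor $\tfrac{s-1}{s}$ combine to produce the precise exponents $K_0^{-3}$ and $\tfrac{1}{6}K_0^{-6}$ appearing in \eqref{global-c}, and one has to account for the term $\|\partial_x^{-1}u_1\|_{L^2}^2$, which appears in $\Phi(0)$ but not explicitly in \eqref{global-c}; the latter is absorbed through the standing hypothesis $u_1\in L^{2}\cap\dot{H}^{-1}$ from Theorem \ref{local}. Once these bookkeeping issues are settled, the remaining steps are standard.
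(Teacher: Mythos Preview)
Your proposal is correct and follows essentially the same route as the paper: conservation of $E$, Sobolev control of the cubic $\int_\rr\fff(u,v)\dd x$ by a $3/2$-power of the quadratic energy, and then Lemma~\ref{lem-gb} with $s=3/2$ to trap the energy below a fixed threshold. The only cosmetic difference is that the paper works with $y(t)=\|\partial_x^{-1}u_t\|_{L^2}^2+\al^2\|u\|_{L^2}^2+\|v\|_{L^2}^2+\|v_t\|_{L^2}^2+\|v_x\|_{L^2}^2$, dropping the $\|u_t\|_{L^2}^2$ term that you keep in $\Phi$; this is harmless since that term is nonnegative and not needed to bound the cubic. Your observation about the mismatch between $\|u_1\|_{L^2}^2$ in \eqref{global-c} and $\|\partial_x^{-1}u_1\|_{L^2}^2$ in $\Phi(0)$ is well taken and is not addressed in the paper either.
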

\begin{proof}
 	Let $y(t)=\|\partial_x^{-1}u_t \|_\lt^2 + \al^2\| u\|_\lt^2+\|v\|_\lt^2+\|v_t\|_\lt^2+\|v_x\|_\lt^2$. 
By using the Sobolev embedding we get
 	\[
 	y(t)\leq 2E(0)+\frac23\left(
 	|b_{vv}|C_3^3  
 	+|B|C_\ast+
 	|C|C_4^2  
 	\right)y^\frac32(t)=
 	2E(0)+\frac23K_0 y^\frac32(t),
 	\]
 	where $K_{0}=|b_{vv}|C_3^3  
 	+|B|C_\ast+
 	|C|C_4^2 $ and  $C_3$, $C_4$ and $C_\ast$  are the best constants for which the estimates $\|f\|_{L^3}\leq C_3  \|f\|_{H^1}$, $\|f\|_{L^4}\leq C_4  \|f\|_{H^1}$, and $\|f\|_{L^\infty}\leq C_\ast \|f\|_{H^1}$ hold.
 	Hence, by using Lemma \ref{lem-gb}, if \eqref{global-c} holds,	
 	then $y(t)$ is bounded. 
 \end{proof}	
\begin{remark}
	It is known (see for instance \cite{nagy}) that the constants $C_3$ and $C_4$ in the proof of Theorem \ref{global-1} are represented by using the unique ground states
	 \[
	 \phi(x)=\left(\frac r2\right)^{\frac1{r-2}}
	 {\rm sech}^{\frac2{r-2}}\left(\frac{r-2}{2}x\right)
	  \] of
	\[
-\phi''+\phi=\phi^{r-1},\qquad r=3,4.
	\]
	Indeed, there holds that
	\[
	C_r^r=\frac{2r}{2+r}\left(\frac{2+r}{r-2}\right)^{\frac{r-2}{4}} \|\phi\|_{L^2}^{2-r}
	=\begin{cases}
  5^{-\frac34}\sqrt{6} &r=3\\
 \frac1{\sqrt{2} }	&r=4
\end{cases}.
		\]
		Hence, if $B=0$, then in  \eqref{global-c} we have 
		\[
  K_0=5^{-\frac34}\sqrt{6}	|b_{vv}|
	 +
	  2^{-\frac14}  \sqrt{|C|} .
		\]
\end{remark}
	 
On the other hand, using the approach in \cite{liu} and the techniques given by \cite{levine},
the following blow-up result holds.
	\begin{theorem}\label{theo-blowup}
	We assume the conditions \eqref{ham1}.	Let $u_0,u_1,v_0,v_1$ be as in Theorem \ref{local}  and, in addition, $\partial_x^{-1}u_{0}\in L^2$. 	Then the local solution $   (u,v)\in C^1([0,T);H^s\times H^{s+1} )$ blows up in finite time if one the following cases holds:
	\begin{enumerate}[(i)]
		\item $E(0) <0$,
		\item   $E(0)\geq0$ and
		\begin{equation}\label{st1}
		(2E(0))^{\frac12}<\frac{\scal{\xi^{-1}\hat{u}_0, \xi^{-1}\hat{u}_1}+\scal{u_0,u_1}+\scal{v_0,v_1}}{\sqrt{\|\partial_x^{-1}u_0\|_\lt^2 + \| u_0\|_\lt^2+\|v_0\|_\lt^2}}.
		\end{equation}
	\end{enumerate}
	\end{theorem}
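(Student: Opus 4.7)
The plan is to emulate the concavity approach of Levine and Liu for the improved Boussinesq component, adapted to the coupled system via the Hamiltonian $E$ of Theorem \ref{hamiltonian}. I would introduce
\[
I(t) := \|\partial_x^{-1} u(\cdot,t)\|_{\lt}^{2} + \|u(\cdot,t)\|_{\lt}^{2} + \|v(\cdot,t)\|_{\lt}^{2},
\]
which is well defined because $\partial_x^{-1}u_0\in L^2$ is assumed and the first-order reformulation \eqref{kgb-equiv} propagates the regularity $u\in L^{2}\cap \dot H^{-1}$. After differentiating twice in $t$, using \eqref{kgb} together with the Plancherel identity $\langle \partial_x^{-1} u,\partial_x^{-1} u_{tt}\rangle+\langle u,u_{tt}\rangle=-\alpha^{2}\|u\|_{\lt}^{2}-\langle u,f_{1}\rangle$, the Hamiltonian relation \eqref{ham1} (which makes $f_1=-\partial_u\fff$, $f_2=\partial_v\fff$, so that $-\langle u,f_1\rangle+\langle v,f_2\rangle=3\int\fff\,dx$), and then conservation of $E$ to eliminate $\int\fff\,dx$, I obtain the key identity
\[
I''(t)=5\bigl(\|\partial_x^{-1}u_t\|_{\lt}^{2}+\|u_t\|_{\lt}^{2}+\|v_t\|_{\lt}^{2}\bigr)+\alpha^{2}\|u\|_{\lt}^{2}+\|v_x\|_{\lt}^{2}+\|v\|_{\lt}^{2}-6E(0).
\]
A single Cauchy--Schwarz inequality applied to $I'(t)/2=\langle\partial_x^{-1}u,\partial_x^{-1}u_t\rangle+\langle u,u_t\rangle+\langle v,v_t\rangle$ then yields the core differential inequality
\[
I(t)\,I''(t)\;\ge\;\tfrac{5}{4}(I'(t))^{2}-6E(0)\,I(t). \qquad (\star)
\]

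The coefficient $5/4>1$ is exactly the Levine-type threshold; the remaining nuisance is the possibly negative term $-6E(0)I(t)$ in case (ii). I would handle both cases uniformly through the auxiliary quantity $Q(t):=(I'(t))^{2}-8E(0)\,I(t)$, showing that $Q(t)>0$ persists along the flow. At $t=0$ this holds in case (i) because $E(0)<0$ and $I(0)>0$ (the hypothesis $E(0)<0$ excludes $(u_0,v_0)\equiv 0$), and in case (ii) because \eqref{st1} is literally $(I'(0))^{2}>8E(0)I(0)$, with the right-hand side of \eqref{st1} being positive also forcing $I'(0)>0$. Computing $Q'(t)=2I'(t)(I''(t)-4E(0))$ and invoking $(\star)$, whenever $Q>0$ and $I'>0$ one has $I''(t)>4E(0)$ and hence $Q'(t)>0$; the set $\{Q>0,\,I'>0\}$ is therefore forward-invariant. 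In case (i), a possible initial interval with $I'(0)\le 0$ is harmless because $I''\ge -6E(0)>0$ forces $I'$ strictly positive after a finite waiting time $t_{0}$, at which $Q(t_0)>0$ still holds, so the argument restarts at $t_0$ (with $t_0=0$ in case (ii)).

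The final step is a Riccati-type integration on $[t_{0},T)$. Since $I'>0$ there, $I$ is strictly increasing; viewing $y(I):=(I'(t))^{2}$ as a function of $I$, $(\star)$ rewrites as $dy/dI=2I''\ge \tfrac{5y}{2I}-12E(0)$, and solving this scalar linear ODE with integrating factor $I^{-5/2}$ yields
\[
(I'(t))^{2}\;\ge\;C_{0}\,I(t)^{5/2}+8E(0)\,I(t),\qquad C_{0}:=\frac{Q(t_{0})}{I(t_{0})^{5/2}}>0.
\]
For $I$ sufficiently large the $I^{5/2}$ term dominates (trivial for $E(0)\ge 0$; automatic in case (i) because $I''\ge-6E(0)>0$ drives $I\to\infty$), so $I'(t)\ge\sqrt{C_{0}/2}\,I(t)^{5/4}$, and a last separation of variables produces a finite blow-up time $T^{*}\le t_{0}+C\,I(t_{0})^{-1/4}$ for $I$. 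This contradicts boundedness of $\|u(\cdot,t)\|_{\lt}+\|v(\cdot,t)\|_{\lt}$ and therefore of the Sobolev norms of Theorem \ref{local}, so the solution must blow up at or before $T^*$. The most delicate step I expect is the derivation of the identity for $I''(t)$ with exactly the coefficient $5$, since both $(\star)$ and the subsequent bootstrap rely on this precise arithmetic combining the $u$- and $v$-nonlinearities via \eqref{ham1}; once this and the invariance $Q(t)>0$ (the truly new input needed in case (ii)) are in place, the Riccati integration is routine.
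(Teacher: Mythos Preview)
Your argument is correct and rests on the same Levine-type concavity scheme as the paper: the same functional $I(t)=\|\partial_x^{-1}u\|_\lt^2+\|u\|_\lt^2+\|v\|_\lt^2$, the same identity $I''=5(\|\partial_x^{-1}u_t\|_\lt^2+\|u_t\|_\lt^2+\|v_t\|_\lt^2)+\alpha^2\|u\|_\lt^2+\|v_x\|_\lt^2+\|v\|_\lt^2-6E(0)$, and the same Cauchy--Schwarz estimate $(\star)$. The differences are in packaging. For $E(0)<0$ the paper augments $I$ by $\beta(t+t_0)^2$ with $0<\beta<-2E(0)$ and $t_0$ large, so that $I^{-1/4}$ is strictly concave with negative initial slope from the outset; you instead keep $I$ unmodified and wait for $I'$ to turn positive (driven by $I''\ge -6E(0)>0$). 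For $E(0)>0$ the paper works with $I_1=I^{-1/4}$, multiplies the inequality $I_1''\le\frac32E(0)I^{-5/4}$ by $I_1'<0$ and integrates; you instead regard $y=(I')^2$ as a function of $I$ and integrate the linear ODI $dy/dI\ge \frac{5y}{2I}-12E(0)$. These two manipulations are algebraically equivalent (your inequality $(I')^2\ge C_0I^{5/2}+8E(0)I$ is exactly the paper's bound on $(I_1')^2$ after unwinding $I_1'=-\tfrac14I^{-5/4}I'$), and your invariant $Q=(I')^2-8E(0)I$ is precisely the quantity $(I_1'(0))^2-\tfrac12E(0)I(0)^{-3/2}$ that the paper checks is positive via \eqref{st1}. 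Your formulation has the virtue of treating $E(0)\ge 0$ uniformly through $Q$; the paper's $\beta(t+t_0)^2$ trick in case (i) avoids your extra waiting step and gives a slightly cleaner bound on the blow-up time. One minor correction: in case (i) your explicit estimate $T^\ast\le t_0+CI(t_0)^{-1/4}$ should allow for a further delay until $C_0I^{3/2}$ dominates $-8E(0)$, but this does not affect the conclusion.
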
	
		\begin{proof}
			Define $I(t)=\|\partial_x^{-1}u \|_\lt^2 + \| u\|_\lt^2+\|v\|_\lt^2+\beta(t+t_0)^2$, where $\beta,t_0\geq0$ will be determined later.
			Then we have
			\[
			\frac12I'(t)=\scal{\ned u,\ned u_t}+ \scal{u, u_t}+\scal{v,v_t}+\beta(t+t_0),
			\]
			and
			\[
			\begin{split}
				\frac12I''(t)
				&	=
				\norm{\ned u_t}_\lt^2+
				\norm{ v_t}_\lt^2+	 	\norm{  u_t}_\lt^2
				-\norm{ v_x}_\lt^2	-\norm{ v }_\lt^2-\al^2 \norm{ u }_\lt^2+\beta+\scal{v,f_2(u,v)}
				-\scal{u,f_1(u,v)}	\\
				&=	
				\frac52\left(\norm{\ned u_t}_\lt^2+
				\norm{ v_t}_\lt^2+	 	\norm{  u_t}_\lt^2\right)
				+\frac12\left( \norm{ v_x}_\lt^2+\norm{ v }_\lt^2+\al^2 \norm{ u }_\lt^2\right)	+\beta-3E(0)	.		
			\end{split}
			\]
			After some direct calculations, we observe that
			\begin{equation}\label{st1b}
			I''(t)I(t)-\frac54(I'(t))^2\geq -3(2E(0)+\beta)I(t),
			\end{equation}	
			where
\begin{equation}\label{bl-1b}
				I_1''(t)=-\frac14I^{-\frac94}(t)\left(I(t)I''(t)-\frac54(I'(t))^2\right),\quad t\geq 0.
			\end{equation} 
			
			If $E(0)<0$, we can choose $\beta<-2E(u_0,v_0)$, so that, from \eqref{st1b}
\begin{equation}\label{st1c}
			I''I-\frac54(I')^2>0.
			\end{equation}
We can assume that $I'(0)>0$ by choosing $t_0>0$ sufficiently large. Define $I_1(t)=(I(t))^{-1/4}$. Then, from \eqref{bl-1b}, \eqref{st1c}, we have $I_1''(t)<0, t>0$.
Therefore, $I_{1}(t)\leq I_{1}(0)+I_{1}'(0)t$, \cite{liu}. Since $I_{1}(0)>0$ and $I_{1}'(0)<0$, then there is som $t^{*}$ with $0<t^{*}<-I_{1}(0)/I_{1}'(0)$ such that $I_{1}(t^{*})=0$ and $I(t)$ blows up at some finite time in the interval $(0,4I(0)/I'(0))$.

If $E(0)=0$, we take $\beta=0$, so that
			\[
			I''I-\frac54(I')^2\geq0.
			\]
Then, from \eqref{bl-1b}, $I_1''(t)\leq 0$ and from \eqref{st1}
$$I_{1}'(0)=-\frac{I'(0)}{4I(0)^{5/4}}<0.$$ Thus, $I(t)$ blows up at finite time with the same argument as in the previous case.

			If $E(0)>0$, we also take $\beta=0$.  Then, $I_1(0)>0$ and, from \eqref{st1}, $I_1'(0)<0$. Furthermore
from \eqref{bl-1b} and \eqref{st1b}

			\begin{equation}\label{bl-1}
				I_1''(t)
				\leq \frac32E(0)I^{-\frac54}(t),
			\end{equation} 
			for $t>0$. Let $t_\ast=\sup\{t,\;I'_1(\tau)<0 \,\text{for}\,\tau\in[0,t)\}
			$. Note that the continuity of $I_1$ implies  $t_\ast>0$. For $t\in [0,t_{*})$, We multiply  \eqref{bl-1} by $2I_1'(t)$ to get
			\[
			\frac\dd{\dd t}((I_1'(t))^2)
			\geq
			\frac12E(0)\left(\frac{\dd}{\dd t}I^{-\frac32}(t)\right).
			\]
			Integrating over $[0,t)$ leads to
			\begin{equation}\label{st2}
			(I'_1(t))^2\geq
			\frac12E(0)I^{-\frac32}(t)+(I_1'(0))^2-\frac12E(0)I^{-\frac32}(0).
			\end{equation}
From \eqref{st1}, we have 
			\[
			(I'_1(0))^2>\frac12
			E(0)I^{-\frac32}(0).
			\]
Hence, from \eqref{st2}, it holds that
\begin{equation}\label{st3}
|I_1'(t)|\geq \sqrt{(I_1'(0))^2-\frac12E(0)
				I^{-\frac32}(0)}\Rightarrow I_1'(t)\leq -\sqrt{(I_1'(0))^2-\frac12E(0)
				I^{-\frac32}(0)}<0,\quad t\in [0,t_{*}).
\end{equation}
From the continuity of $I_{1}'(t)$ and the definition of $t_{*}$, it follows that $t_{*}=\infty$ and \eqref{st3} holds for all $t\geq 0$. Integrating over $(0,t), t>0$, we have
\begin{equation*}
I_1(t)\leq I_1(0)-\sqrt{(I_1'(0))^2-\frac12E(0)
				I^{-\frac32}(0)}\;t.
\end{equation*}
Thus, $I_1(t^\ast)=0$ for some $t^\ast\in(0,\ell]$, where $\ell=I_1(0)/\sqrt{(I_1'(0))^2-\frac12E(0)
				I^{-\frac32}(0)}$, and
\begin{equation*}
			\|\partial_x^{-1}u \|_\lt^2 + \| u\|_\lt^2+\|v\|_\lt^2\to+\infty,
			\end{equation*}
			as $t\to t^\ast$.

		\end{proof}
\section{Solitary-wave solutions of Boussinesq Klein-Gordon system}\label{solit-sec}
This section is devoted to the existence of solitary wave solutions of \eqref{kgb1}. These are smooth traveling-wave solutions $u(x,t)=u(x-c_{s}t), v(x,t)=v(x-c_{s}t)$, with $c_{s}\neq 0$ and such that the derivatives $u^{j)}(X), v^{j)}(X)\rightarrow 0$ as $|X|\rightarrow\infty, X=x-c_{s}t$ for $j=1(1)3$. Then the profiles $u,v$ must satisfy the coupled system
\begin{eqnarray}
\left((c_{s}^{2}-\alpha^{2})-c_{s}^{2}\partial_{x}^{2}\right)u&=&f_{1}(u,v),\nonumber\\
\left(1-(1-c_{s}^2)\partial_{x}^{2}\right)v&=&f_{2}(u,v).\label{gm2}
\end{eqnarray}
\subsection{Existence via linearization}
\label{nft}
One of the approaches to study the existence of solutions of (\ref{gm2}) is based on Normal Form theory, \cite{IK,Champ,ChampS,ChampT}. We write (\ref{gm2}) as a first-order differential system for $U=(U_{1},U_{2},U_{3},U_{4})^{T}:=(u,u',v,v')^{T}$, as
\begin{eqnarray}
U^{\prime}&=&V(U,c_{s}):=L(c_{s})U+R(U,c_{s}),\label{gm2b}\\
L(c_{s})&=&\begin{pmatrix}0&1&0&0\\\mu&0&0&0\\
0&0&0&1\\0&0&\frac{1}{1-c_{s}^{2}}&0\end{pmatrix},\quad
R(U,c_{s})=\begin{pmatrix}0\\\frac{-1}{c_{s}^{2}}f_{1}(U)\\
0\\\frac{-1}{1-c_{s}^{2}}f_{2}(U)\end{pmatrix},\nonumber\\
f_{1}(U)&=&a_{uu}U_{1}^{2}+2a_{uv}U_{1}U_{3}+a_{vv}U_{3}^{2},\nonumber\\
f_{2}(U)&=&b_{uu}U_{1}^{2}+2b_{uv}U_{1}U_{3}+b_{vv}U_{3}^{2},\label{gm2c}
\end{eqnarray}
where $\mu=1-\alpha^{2}/c_{s}^{2}$. Note that the system (\ref{gm2b}), (\ref{gm2c}) admits $U=0$ as solution and the vector field $V$ is reversible, in the sense that for all $U, c_{s}$,
\begin{equation}\label{rev}
SV(U,c_{s})=-V(SU,c_{s}),
\end{equation} 
where $S={\rm diag}(1,-1,1,-1)$. These properties enable to study the existence of solutions of (\ref{gm2b}), (\ref{gm2c}), for small values of $\mu$, by using the Normal Form theory, analyzing first the linearization at $U=0$. The characteristic equation is
\begin{eqnarray}
\lambda^{4}-B\lambda^{2}+A=0,\label{gm3}
\end{eqnarray}
where
\begin{eqnarray}
A=\frac{c_{s}^{2}-\alpha^{2}}{c_{s}^{2}(1-c_{s}^{2})}=\frac{\mu}{1-c_{s}^{2}},\;
B=\mu +\frac{1}{1-c_{s}^{2}}.\label{gm3b}
\end{eqnarray}
The spectrum of the linearization can be studied by using \cite{Champ}. The distribution of the roots in the $(B,A)$-plane is sketched in Figure \ref{fig_A1}, which reproduces the bifurcation diagram, along with the location and the type of the four eigenvalues, shown in Figure 1 of \cite{Champ}. 
\begin{figure}[htbp]
\centering
{\includegraphics[width=1\textwidth]{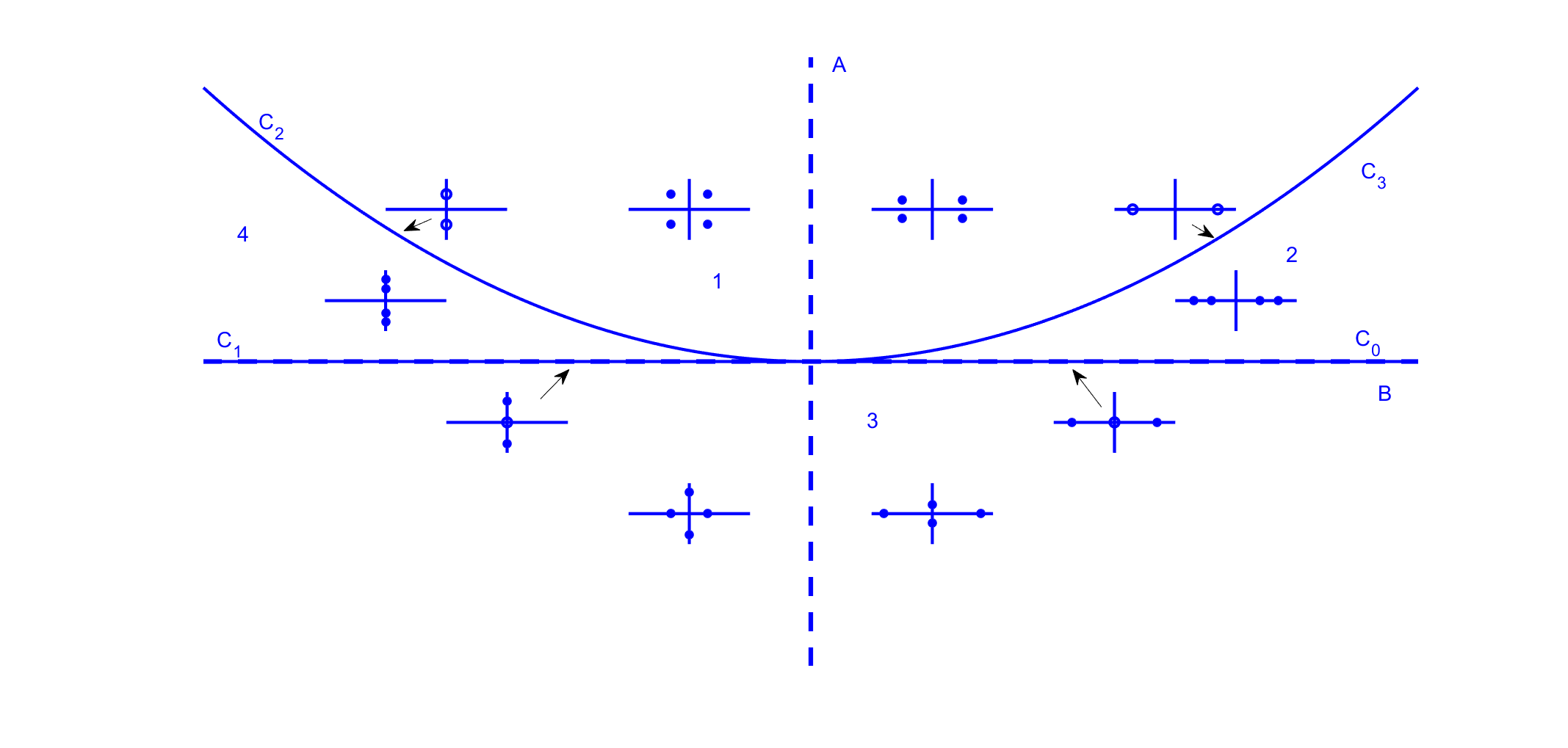}}
\caption{Linearization at the origin of (\ref{gm2b}) (cf. Figure 1 of \cite{Champ}): Regions $1$ to $4$ in the $(B,A)$-plane, delimited by the bifurcation curves $\mathbb{C}_{0}$ to $\mathbb{C}_{3}$ given by (\ref{bifurcurv}), and schematic representation of the position in the complex plane of the roots of (\ref{gm3}) for each curve and region. (Dot: simple root, larger dot: double root.)}
\label{fig_A1}
\end{figure}
There we can distinguish four regions delimited by the bifurcation curves
\begin{eqnarray}
\mathbb{C}_{0}&=&\{(B,A) / A=0, B>0\},\nonumber\\
\mathbb{C}_{1}&=&\{(B,A) / A=0, B<0\},\nonumber\\
\mathbb{C}_{2}&=&\{(B,A) / A>0, B=-2\sqrt{A}\},\nonumber\\
\mathbb{C}_{3}&=&\{(B,A) / A>0, B=2\sqrt{A}\}.\label{bifurcurv}
\end{eqnarray}
The Center Manifold Theorem and the theory of reversible bifurcations can be applied to study the existence of homoclinic orbits in each bifurcation. The reduced Normal Form systems reveal the existence of two types of trajectories: homoclinic to zero and homoclinic to periodic orbits. The associated solutions correspond to classical solitary waves (CSW's) and generalized solitary waves (GSW's), respectively. In addition, periodic and quasi-periodic solutions can be identified, \cite{IK}. 

Before applying the approach in \cite{Champ} near the bifurcation curves $\mathbb{C}_{0}$ to $\mathbb{C}_{3}$ to the case of (\ref{gm2b}), using $\mu$ as bifurcation parameter, we first make a description of regions and curves presented in Figure \ref{fig_A1} and according to the values of $A$ and $B$ given by (\ref{gm3b}).  
 Note first that $\mathbb{C}_{0}$ is characterized by
\begin{eqnarray*}
\mu=0 \;(c_{s}=\pm\alpha), \; |c_{s}|<1 \;(B>0)
\end{eqnarray*}
while the conditions for the curve $\mathbb{C}_{1}$ are
\begin{eqnarray*}
\mu=0 \;(c_{s}=\pm\alpha), \; |c_{s}|>1\; (B<0)
\end{eqnarray*}
Observe now that $$B^{2}-4A=(\mu-\frac{1}{1-c_{s}^{2}})^{2},$$ so $B^{2}-4A=0$ if and only if
\begin{eqnarray}
\frac{1}{1-c_{s}^{2}}=\mu=\frac{c_{s}^{2}-\alpha^{2}}{c_{s}^{2}}.\label{gm3c}
\end{eqnarray}
Let us study condition (\ref{gm3c}). We have two possibilities:
\begin{itemize}
\item[(P1)] If $\mu>0$ then $1-c_{s}^{2}>0$. This means that $\alpha^{2}<c_{s}^{2}<1$.
\item[(P2)] If $\mu<0$ then $1-c_{s}^{2}<0$ and, therefore, $1<c_{s}^{2}<\alpha^{2}$.
\end{itemize}
On the other hand, (\ref{gm3c}) leads to the quadratic equation for $c_{s}^{2}$
$$c_{s}^{4}-\alpha^{2}c_{s}^{2}+\alpha^{2}=0,$$ yielding
$$c_{s}^{2}=\widetilde{c}_{\pm}(\alpha):=\frac{\alpha^{2}\pm\sqrt{\alpha^{4}-4\alpha^{2}}}{2},$$ which requires $\alpha^{2}\geq 4$. However, note that $\widetilde{c}_{\pm}(\alpha)$ satisfy the properties
$$\widetilde{c}_{-}(\alpha)<\widetilde{c}_{+}(\alpha)<\alpha^{2},\quad
\widetilde{c}_{-}(\alpha)<\widetilde{c}_{+}(\alpha)<1.$$ Therefore, it is not possible to have any of the two possibilities (P1) or (P2). Thus $B^{2}-4A>0$ and for the case at hand the curves $\mathbb{C}_{2}, \mathbb{C}_{3}$ are not present.
Furthermore, if we additionally assume $A> 0$ then $|B|>2\sqrt{A}$. That is, $B>2\sqrt{A}$ or $B<-2\sqrt{A}$. Consequently, region 1 is empty here.

Using (\ref{gm3b}) we can characterize regions 2 and 4. In the first case, where $A, B>0$ and $B>2\sqrt{A}$, observe that if $\mu<0$, then the form of $A$ in (\ref{gm3b}) implies $1-c_{s}^{2}<0$ and the form of $B$ gives $B<0$, which is not possible. Therefore $\mu>0$ and from (\ref{gm3b}) we have $1-c_{s}^{2}>0$; thus region 2 is characterized by
\begin{eqnarray}
\alpha^{2}<c_{s}^{2}<1.\label{gm3d}
\end{eqnarray}
Similarly, it is not hard to see that region 4 (for which $A>0, B<0, B<-2\sqrt{A}$) we must have $\mu<0$ and then this region is described as
$$1<c_{s}^{2}<\alpha^{2}.$$
Finally, region 3 can be divided into two subregions:
\begin{itemize}
\item Region 3R (right): $A<0, B>0$.
\item Region 3L (left): $A<0, B<0$.
\end{itemize}
Consider first region 3R and assume $\mu>0$. Then, using \eqref{gm3b}, the conditions  $A<0, B>0$ hold when
\begin{equation}\label{*3}
c_{s}^{2}>\alpha^{2},\quad c_{s}^{2}>1,\quad p_{\alpha}(c_{s}^{2})>0,
\end{equation}
where 
\begin{eqnarray*}
p_{\alpha}(z)&=&z^{2}-(2+\alpha^{2})z+\alpha^{2}=(z-z_{+}(\alpha))(z-z_{-}(\alpha)),\nonumber\\
z_{\pm}(\alpha)&=&\frac{1}{2}\left((2+\alpha^{2})\pm\sqrt{\alpha^{4}+4}\right),\label{gm3e}
\end{eqnarray*}
We note that
\begin{equation}\label{*3b}
z_{-}(\alpha)<\alpha^{2}<z_{+}(\alpha),\quad 
z_{-}(\alpha)<1<z_{+}(\alpha).
\end{equation}
Due to \eqref{*3}, this necessarily implies that $c_{s}^{2}>z_{+}(\alpha)$. Similarly, when $\mu<0$, conditions  $A<0, B>0$ hold when
\begin{equation*}
c_{s}^{2}<\alpha^{2},\quad c_{s}^{2}<1,\quad p_{\alpha}(c_{s}^{2})<0,
\end{equation*}
which, from \eqref{*3b}, implies
\begin{eqnarray*}
z_{-}(\alpha)<c_{s}^{2}<1<z_{+}(\alpha),\quad z_{-}(\alpha)<c_{s}^{2}<\alpha^{2}<z_{+}(\alpha)
\end{eqnarray*}
Similar arguments can be used to describe region 3L. All this is summarized in Table \ref{KGB_tav1}.

\begin{table}[ht]
\begin{center}
\begin{tabular}{ |c|c|c|c|  }
 \hline
 \multicolumn{2}{|c|}{Region 3R ($A<0, B>0$)}& \multicolumn{2}{|c|}{Region 3L ($A<0, B<0$)} \\
 \hline
 $\mu>0$& $\mu<0$&$\mu>0$& $\mu<0$\\
 \hline
 $c_{s}^{2}>z_{+}(\alpha)$ & $z_{-}(\alpha)<c_{s}^{2}<\min\{1,\alpha^{2}\}$   &$\min\{1,\alpha^{2}\}<c_{s}^{2}<z_{+}(\alpha)$&  $c_{s}^{2}<z_{-}(\alpha)$\\
 \hline
\end{tabular}
\end{center}
\caption{Description of region 3 in Figure \ref{fig_A1} for the case of \eqref{gm2b}.}
\label{KGB_tav1}
\end{table}


We now study the information provided by the Normal Form Theory (NFT) close to each curve $\mathbb{C}_{j}, 0\leq j\leq 3$.
In the case of $\mathbb{C}_{0}$, the linearization matrix $L(\pm\alpha)$ has two simple eigenvalues equal to
\begin{eqnarray}
\lambda_{\pm}=\pm\frac{1}{\sqrt{1-\alpha^{2}}},\label{NFT3}
\end{eqnarray}
and the zero eigenvalue with geometric multiplicity one and algebraic multiplicity two. As in \cite{IK,Champ}, the main role in describing the dynamics close to $\mathbb{C}_{0}$ by NFT is played by this two-dimensional center manifold. When ${\mu}$, $A$ and $B$ are positive, and near $\mathbb{C}_{0}$ the linear dynamics is given by the spectrum of $L({\mu})$ which consists of four real eigenvalues (region 2 in Figure \ref{fig_A1}). In this case, the normal form system has a unique solution, homoclinic to zero at infinity, symmetric and unique up to spatial translations, (\cite{IK}, Proposition 3.1), that corresponds to a CSW solution of \eqref{gm2}.
The form of the waves is illustrated in Figure~\ref{Fig:gm1}.
\begin{figure}
  \centering
  \subfigure[]
  {\includegraphics[width=0.49\textwidth]{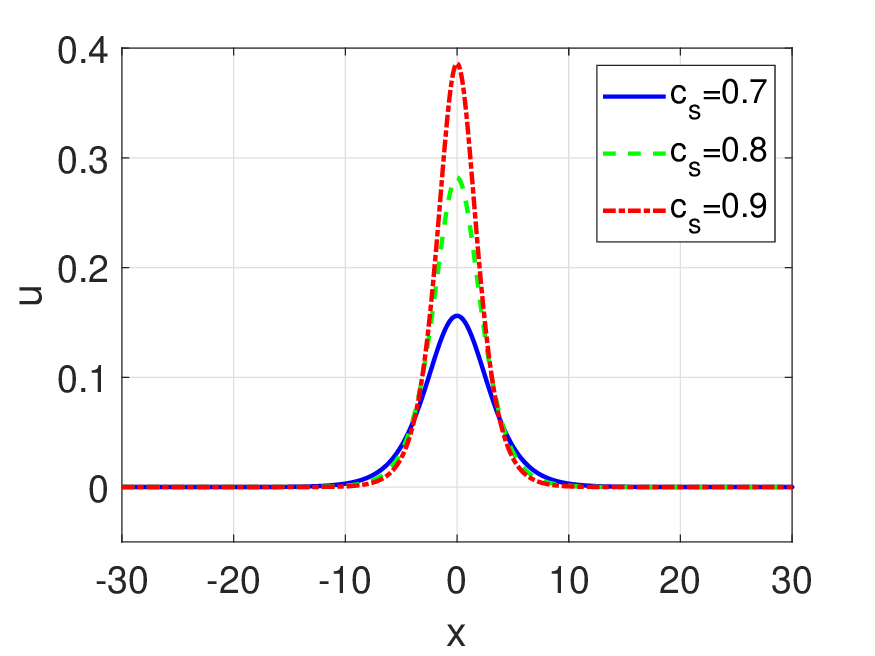}}
  \subfigure[]
  {\includegraphics[width=0.49\textwidth]{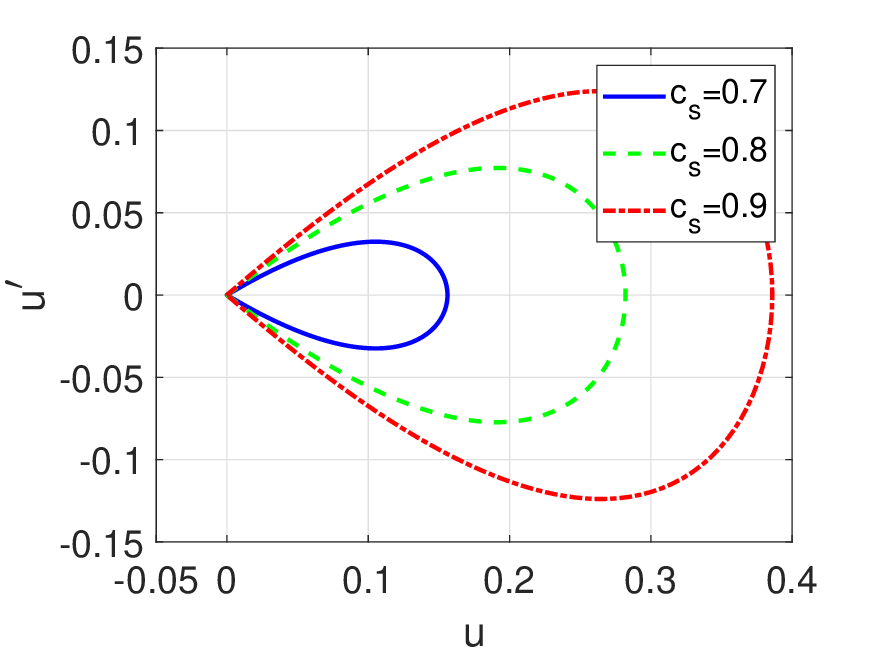}}
  \subfigure[]
  {\includegraphics[width=0.49\textwidth]{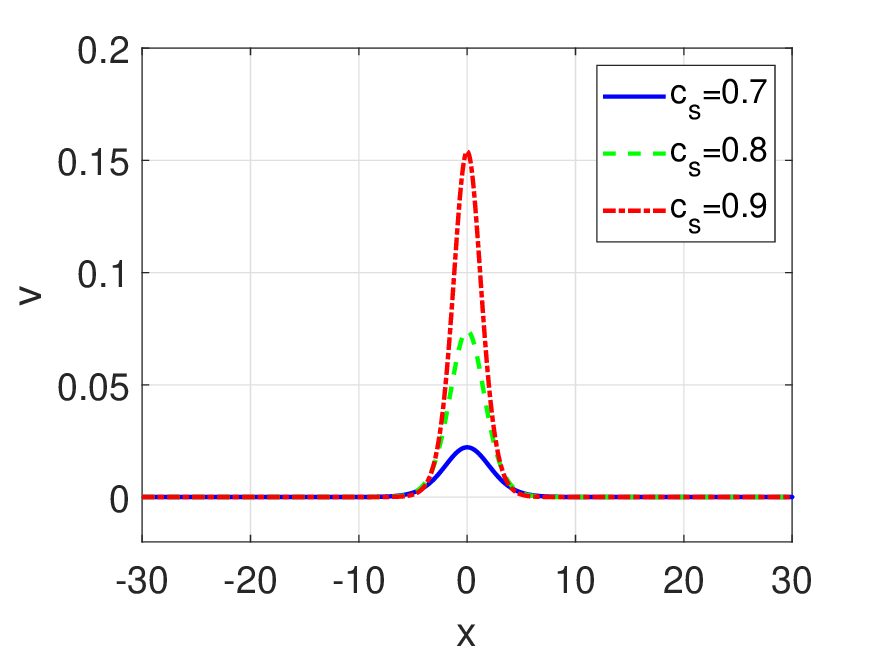}}
  \subfigure[]
  {\includegraphics[width=0.49\textwidth]{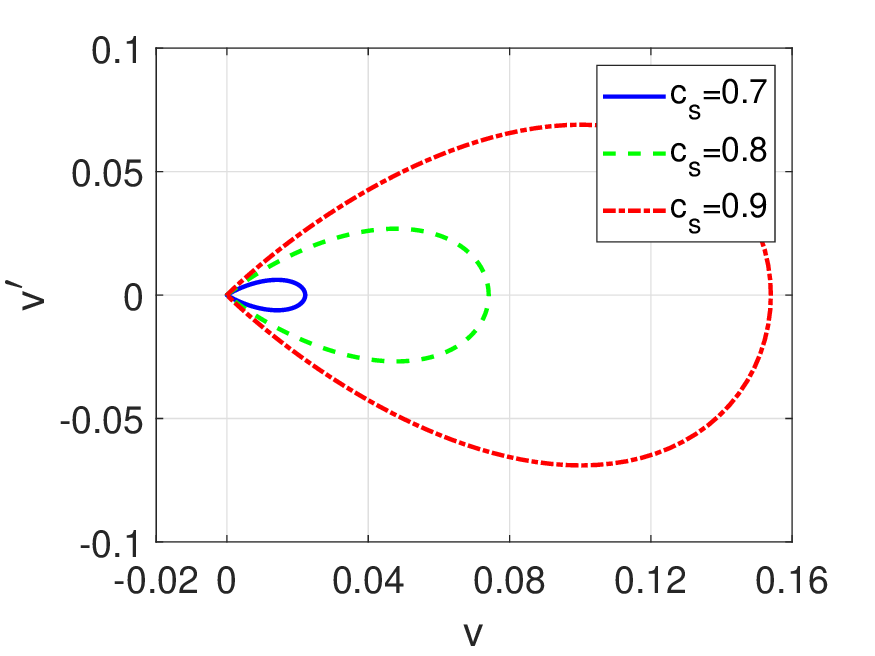}}
  \caption{Approximate classical solitary wave profile solutions of (\ref{kgb}) with $f_{1}(u,v)=(u+v)^{2}$, $f_{2}(u,v)=u^{2}+v^{2}$, $\alpha=0.6$, and several speeds. (a) $u$ profiles; (b) phase portraits of (a); (c) $v$ profiles; (d) phase portraits of (c).}
  \label{Fig:gm1}
\end{figure}

Let $\{w_{1},w_{2},w_{3},w_{4}\}$ be a basis of generalized eigenvectors of $L(\pm\alpha)$, with $w_{3}$, $w_{4}$ eigenvectors of $\lambda_{+}$ and $\lambda_{-}$, resp., $w_{1}$ eigenvector associated to the zero eigenvalue and $w_{2}$ such that $L(\pm\alpha)w_{2}=w_{1}\,$. Explicitly, we take
\begin{align*}
  w_{1}\ =(1,0,0,0)^{T}, \qquad & w_{2}\ =(0,1,0,0)^{T},\\
  w_{3}\ =(0,0,1,\lambda_{+})^{T}, \qquad & w_{4}\ =(0,0,1,\lambda_{-})^{T}.
\end{align*}
Note that $w_{1}$, $w_{2}$ additionally satisfy $Sw_{1}=w_{1}$, $Sw_{2}\ =-w_{2}$, where $S$ is given by \eqref{rev}. If $P$ is the matrix with columns given by the $w_{j}$'s, then we consider the new variables $V=(v_{1},v_{2},v_{3},v_{4})$ such that $U=PV$. The system (\ref{gm2b}) in the new variables takes the form
\begin{eqnarray*}
  v_{1}'&=&v_{2}, \\
  v_{2}'&=&\mu v_{1}-\frac{1}{c_{s}^{2}}\left(a_{uu}v_{1}^{2}+2a_{uv}v_{1}(v_{3}+v_{4})+a_{vv}(v_{3}+v_{4})^{2}\right)\\
  v_{3}'&=&\alpha_{12} v_{3}+\beta_{1} v_{4}- \frac{\beta}{1-c_{s}^{2}}\left(b_{uu}v_{1}^{2}+2b_{uv}v_{1}(v_{3}+v_{4})+b_{vv}(v_{3}+v_{4})^{2}\right)\\
  v_{4}'&=&-\beta_{1} v_{3}-\alpha_{12} v_{4}+ \frac{\beta}{1-c_{s}^{2}}\left(b_{uu}v_{1}^{2}+2b_{uv}v_{1}(v_{3}+v_{4})+b_{vv}(v_{3}+v_{4})^{2}\right),
\end{eqnarray*}
where, since $\frac{1}{1-c_{s}^{2}}=\frac{1}{1-\alpha^{2}}+O(\mu)$ as $\mu\rightarrow 0$, then
\begin{eqnarray*}
\alpha_{12}&=&\frac{-\frac{1}{1-\alpha^{2}}-\frac{1}{1-c_{s}^{2}}}{-\frac{2}{\sqrt{1-\alpha^{2}}}}=\lambda_{+}+O(\mu),\\
\beta_{1}&=&\frac{\frac{1}{1-\alpha^{2}}-\frac{1}{1-c_{s}^{2}}}{-\frac{2}{\sqrt{1-\alpha^{2}}}}=+O(\mu),
\end{eqnarray*}
and $\beta=\frac{1}{\lambda_{+}-\lambda_{-}}$. Then $-\alpha_{12}=\lambda_{-}+O(\mu)$ and if $||V||\rightarrow 0$ ($||\cdot||$ denotes the Euclidean norm in $\mathbb{C}^{4}$) then
\begin{eqnarray}
  v_{1}'&=&v_{2}, \label{gm4a}\\
  v_{2}'&=&\mu v_{1}-\frac{a_{uu}}{c_{s}^{2}}v_{1}^{2}+O(||V||_{2}^{2}),\label{gm4b}\\
  v_{3}'&=&\lambda_{+}v_{3}+O(\mu||V||_{2}+||V||_{2}^{2}),\label{gm4c}\\
  v_{4}'&=&\lambda_{-}v_{4}+O(\mu||V||_{2}+||V||_{2}^{2}),\label{gm4d}
\end{eqnarray}
where we assume $a_{uu}>0$. Then the center-manifold reduction theorem, \cite{IA}, ensures the existence of bounded solutions of the (\ref{gm4a})-(\ref{gm4d}) on a locally invariant, center manifold determining a dependence $(v_{3},v_{4})=h(\mu,v_{1},v_{2})$ for some smooth $h(\mu,v_{1},v_{2})=O(\mu||(v_{1},v_{2})|| +||(v_{\,1},\,v_{\,2})||^{2})$ as $\mu$, $||(v_{1},v_{2})|| \ \rightarrow 0$, see \cite[Theorem 3.2]{IK}. Furthermore, every solution $v_{\,1}\,$, $v_{\,2}$ of the reduced system (\ref{gm4a})-(\ref{gm4b}) with $(v_{\,3},\,v_{\,4})\ =\ h\,(\,\mu,\,v_{\,1},\,v_{\,2}\,)$ induces a solution of (\ref{gm4a})-(\ref{gm4d}). The normal form system can be written as
\begin{equation*}\label{eq:snf}
  v_{1}'=v_{2},\quad v_{2}'={\rm sign}(\mu)\,v_{1}-\frac{3}{2}v_{1}^{2} +O(\mu),
\end{equation*}
which admits, for $\mu>0$, a solution of the form $v_{1}(x)={\rm sech}^{2}(x/2)+O(\mu)$, $v_{2}=v_{1}'$. For the persistence of this homoclinic orbit from the perturbation connecting to the original system (\ref{gm2b}), (\ref{gm2c}), see \cite{IK,Champ}.

In the case of $\mathbb{C}_{1}$, the spectrum of $L(\pm\alpha)$ consists of zero (with algebraic multiplicity two) and the two simple imaginary eigenvalues given by (\ref{NFT3}) (recall that $c_{s}^{2}>1$). The arguments used in \cite{IK}, Proposition 3.2, apply
here and NFT reduces (\ref{gm2b}), on the center manifold, for ${\mu}> 0$ small enough, to a normal form system which admits homoclinic solutions to periodic orbits, that is GSW solutions. Information about the structure of the periodic orbits can also be obtained, cf. \cite{Lombardi2000} and references therein. For our particular case, the basis $\{w_{1},w_{2},w_{3},w_{4}\}$, in $\mathbb{C}^{4}$, with $w_{1}$, $w_{2}$ as above, contains the eigenvectors
\begin{eqnarray*}
  w_{3}=(0,0,1,\frac{i}{\sqrt{\alpha^{2}-1}})^{T},\; 
  w_{4}=(0,0,1,\frac{-i}{\sqrt{\alpha^{2}-1}})^{T},
\end{eqnarray*} 
associated to $\pm\frac{i}{\sqrt{\alpha^{2}-1}}$, respectively. Following \cite{Lombardi2000} (see also \cite{IK}), let $\{w_{1}^{*},w_{2}^{*},w_{3}^{*},w_{4}^{*}\}$ be the corresponding dual basis (with, in particular, $w_{2}^{*}=w_{2}$). If $D_{\mu,U}^{2}V(U,\mu)$ denotes the derivative, with respect to $\mu$, of the {Jacobian} matrix of $V$ in \eqref{gm2b} and $D_{U,U}V(U,\mu)^{2}$ the {Hessian} operator of $V$, then
\begin{eqnarray*}
  c_{10}&:=&\langle\, w_{2}^{*},D_{\mu,U}^{2}V(0,\,0)w_{1}\rangle=1, \\
  c_{20}&:=&\frac{1}{2}\langle w_{2}^{*},D_{U,U}^{2}V(0,0)[w_{1},w_{1}]\rangle=-\frac{1}{c_{s}^{2}}a_{uu} \neq\ 0.
\end{eqnarray*}
Therefore, from \cite[Theorem~7.1.1]{Lombardi2000} (see also \cite{DDM2019}), \eqref{gm2b} admits, for $\mu$ small enough and near $U=0$, two orbits homoclinic to a one-parameter family of periodic orbits of arbitrarily small amplitude (see \cite{IK} for the application of NFT to the reduced system in this case).
The form of the waves is illustrated in Figure~\ref{Fig:gm2}. 
\begin{figure}
  \centering
  \subfigure[]
  {\includegraphics[width=0.49\textwidth]{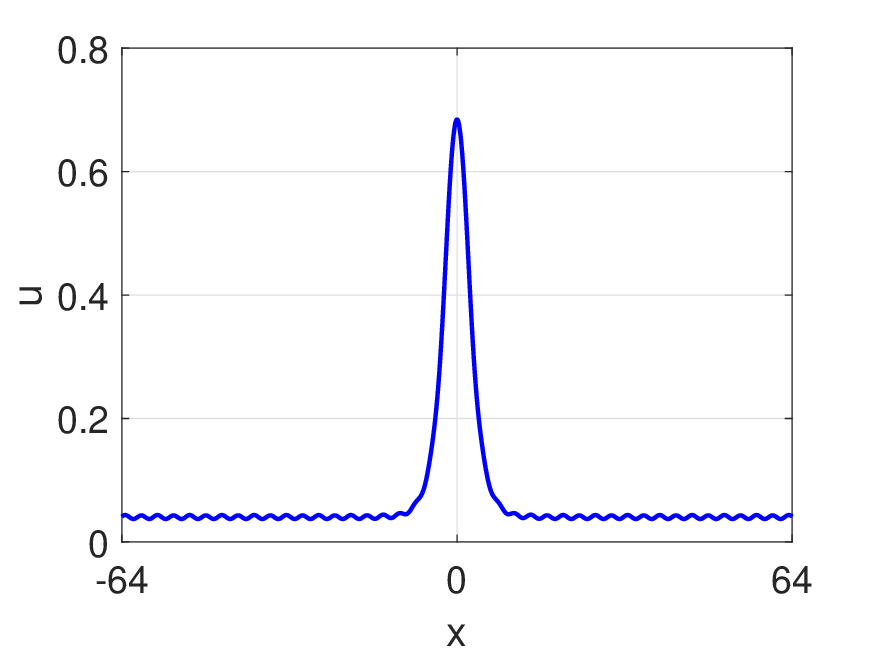}}
  \subfigure[]
  {\includegraphics[width=0.49\textwidth]{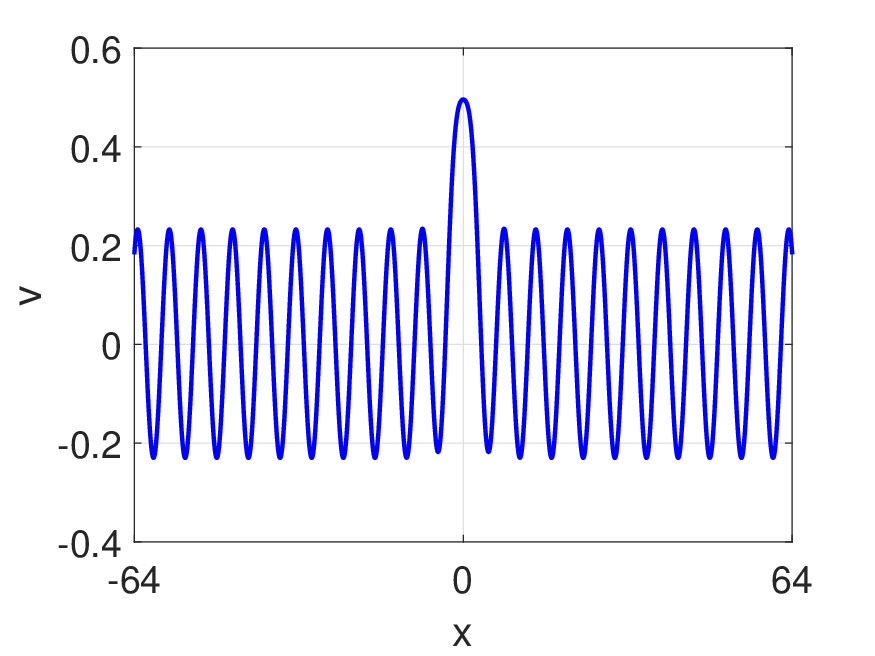}}
  \subfigure[]
  {\includegraphics[width=0.49\textwidth]{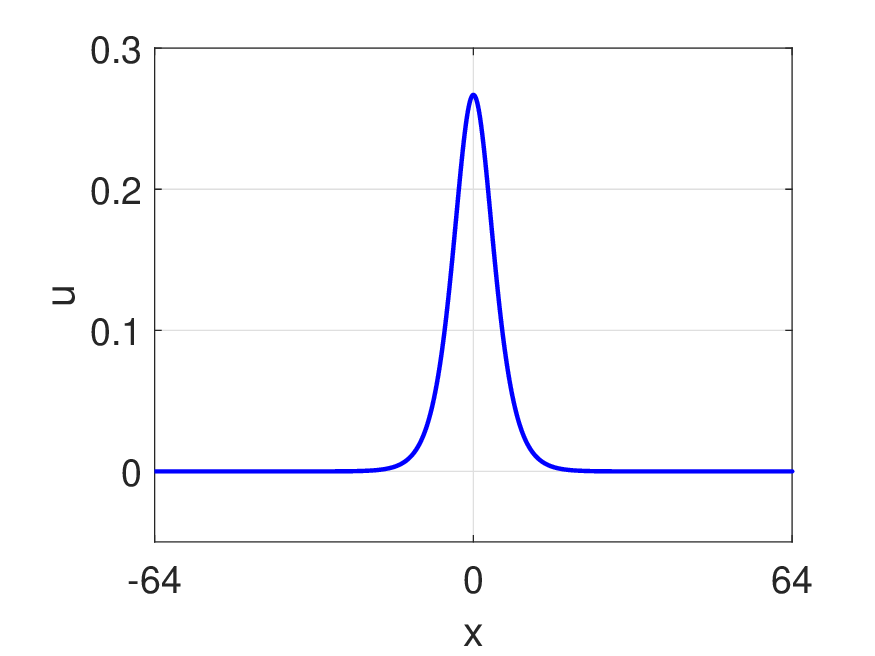}}
  \subfigure[]
  {\includegraphics[width=0.49\textwidth]{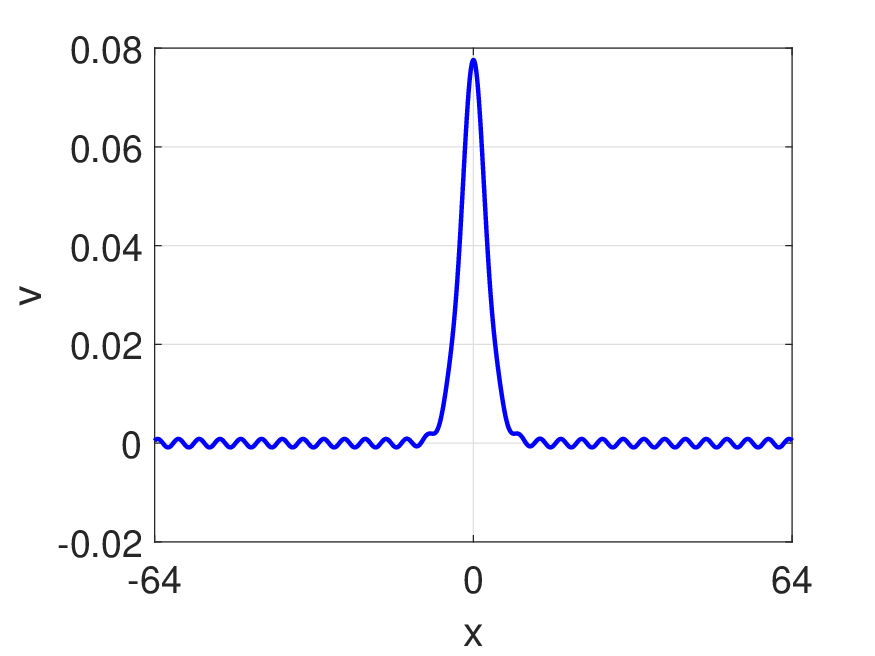}}
  \caption{ Approximate generalized solitary wave profile solutions of (\ref{kgb}) with $f_{1}(u,v)=u^{2}+v^{2}$, $f_{2}(u,v)=u^{2}$. (a), (b) $\alpha=1.12, c_{s}=1.4$; (c), (d) $\alpha=1.12, c_{s}=1.2$. (Region 3 of Figure \ref{fig_A1}.)}
  \label{Fig:gm2}
\end{figure}

\begin{remark}
If ${\mu}$ is negative (with $|\mu|$ small), by similar arguments to those of \cite{IK}, NFT establishes the existence of a family of periodic solutions of the reduced system (region 3, close to $\mathbb{C}_{0}$ and region 4, close to $\mathbb{C}_{1}$), unique up to spatial translations, see Figure \ref{Fig:gm3}.
\end{remark} 
\begin{figure}
  \centering
  \subfigure[]
  {\includegraphics[width=0.49\textwidth]{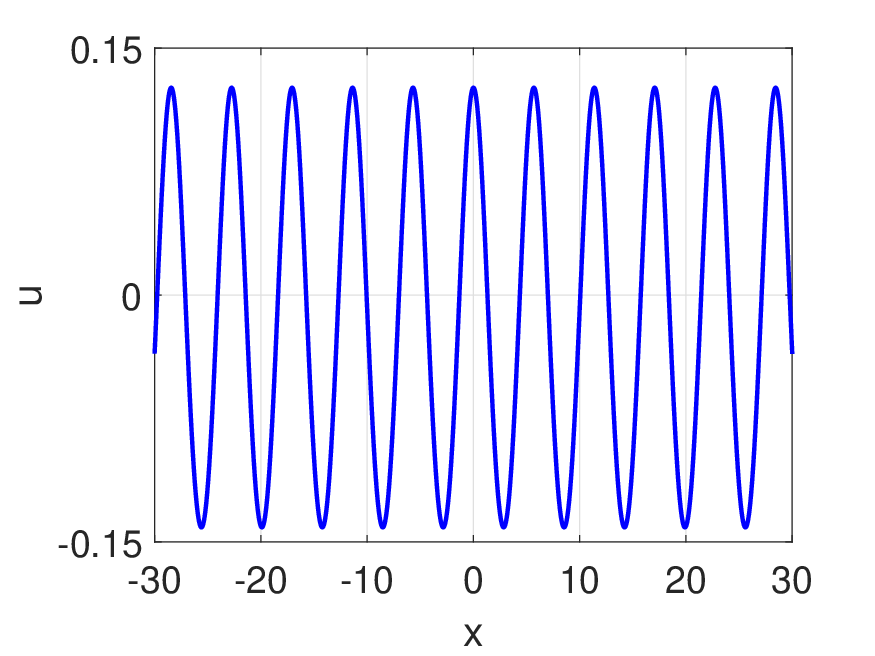}}
  \subfigure[]
  {\includegraphics[width=0.49\textwidth]{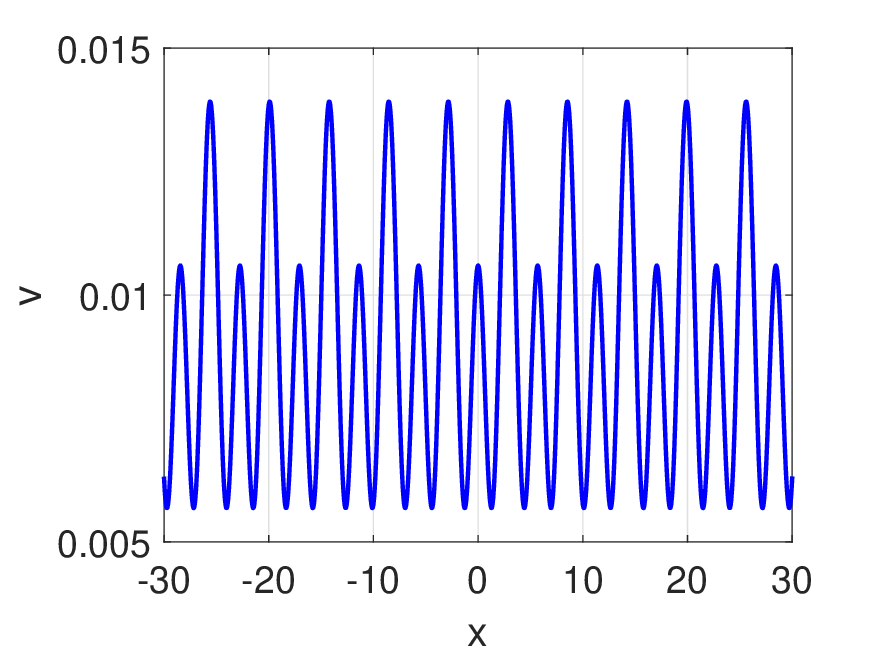}}
  \subfigure[]
  {\includegraphics[width=0.49\textwidth]{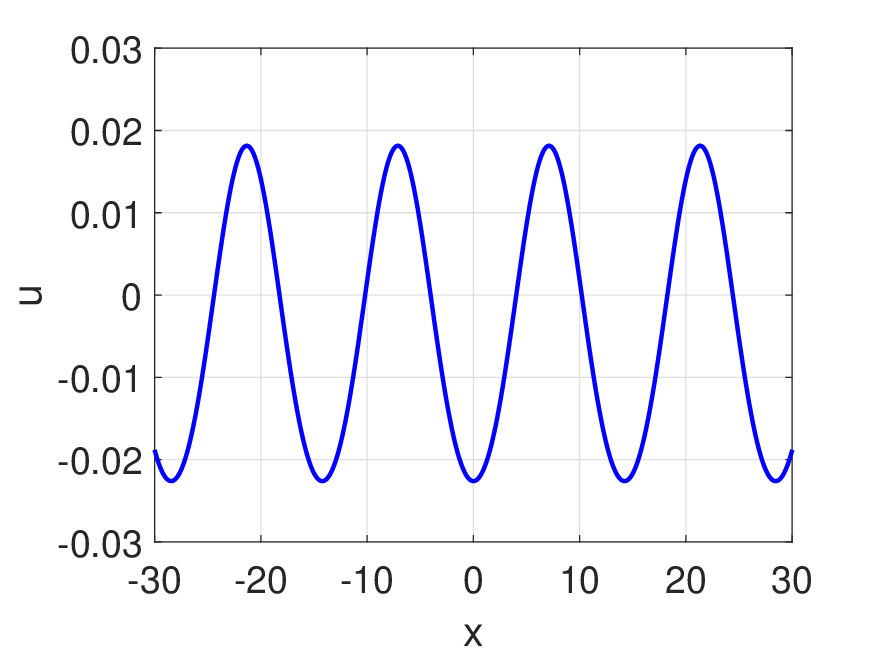}}
  \subfigure[]
  {\includegraphics[width=0.49\textwidth]{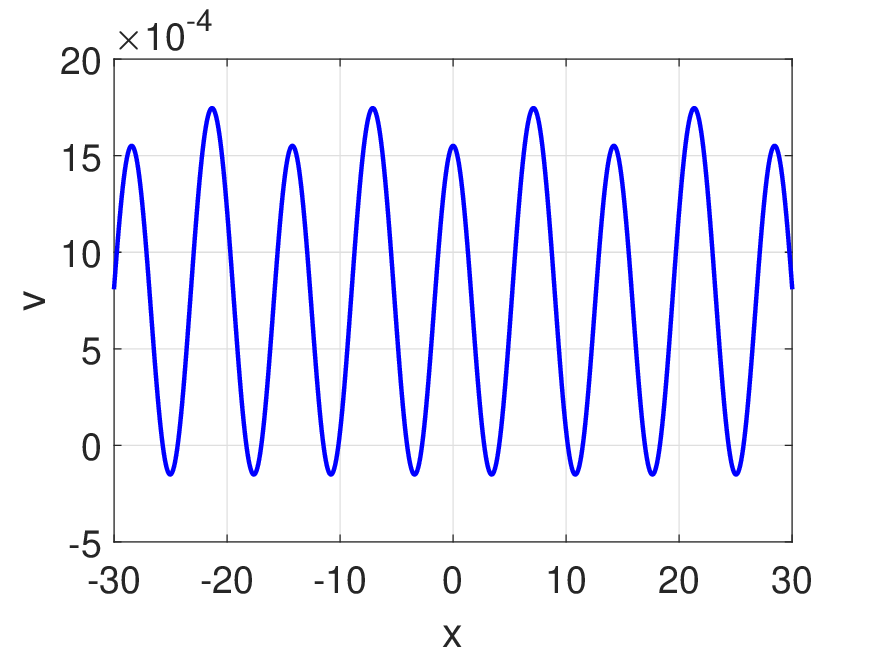}}
  \caption{ Approximate generalized solitary wave profile solutions of (\ref{kgb}) with $f_{1}(u,v)=u^{2}+v^{2}$, $f_{2}(u,v)=u^{2}$: (a),(b) $\alpha=1.2, c_{s}=0.8$ (region 3); (c),(d) $\alpha=1.2, c_{s}=1.1$ (region 4).}
  \label{Fig:gm3}
\end{figure}

\subsection{Existence via Positive Operator theory}
The existence of classical solitary waves can also be justified by using the Positive Operator theory, developed by Benjamin et al. in \cite{BenjaminBB1990} among others. The case of systems of particular interest here can be analyzed from \cite{BonaCh2002}. The theory makes use of the Fourier representation of (\ref{gm2b})
\begin{eqnarray}
((c_{s}^{2}-\alpha^{2})+c_{s}^{2}k^{2})\widehat{u}(k)&=&\widehat{f_{1}(u,v)}(k),\nonumber\\
(1+k^{2}(1-c_{s}^{2}))\widehat{v}(k)&=&\widehat{f_{2}(u,v)}(k),\; k\in\mathbb{R}.\label{pot1}
 \end{eqnarray}
Let us asume that \eqref{gm3d} holds. Then, for all $k\in\mathbb{R}$
\begin{eqnarray*}
p_{1}(k)&=&(c_{s}^{2}-\alpha^{2})+c_{s}^{2}k^{2}>0,\\
p_{2}(k)&=&1+k^{2}(1-c_{s}^{2})>0,
\end{eqnarray*}
and we can invert (\ref{pot1}) to have
\begin{eqnarray*}
\widehat{u}(k)=\frac{\widehat{f_{1}(u,v)}(k)}{p_{1}(k)},\quad
\widehat{v}(k)=\frac{\widehat{f_{2}(u,v)}(k)}{p_{2}(k)},
\end{eqnarray*}
which can be written in a fixed-point form $(u,v)=\mathcal{A}(u,v)$ as
\begin{eqnarray}
u&=&k_{uu}\ast u^{2}+2k_{uv}\ast uv+k_{vv}\ast v^{2},\nonumber\\
v&=&m_{uu}\ast u^{2}+2m_{uv}\ast uv+m_{vv}\ast v^{2},\label{pot2}
\end{eqnarray}
where $\ast$ denotes convolution and
\begin{eqnarray}
k_{\gamma\beta}(x)=\frac{a_{\gamma\beta}}{sc_{s}^{2}}\sqrt{2\pi}e^{-s|x|},\quad
m_{\gamma\beta}(x)=\frac{b_{\gamma\beta}}{r(1-c_{s}^{2})}\sqrt{2\pi}e^{-r|x|},\label{pot3} 
\end{eqnarray}
for $\gamma,\beta=u,v$ and
$$s=\sqrt{\mu}=\sqrt{1-\alpha^{2}/c_{s}^{2}}, r=1/\sqrt{1-c_{s}^{2}}.$$ 
The application of Positive Operator Theory to \eqref{pot2} guarantees the existence of a solution in the cone
\[
\mathbb{K} =\{(f,g)\in X=C\times C: 
  f, g\,\; \text{are nonnegative non-increasing even functions  on}\, [0,\infty)\},
\]
 where $C=C(\mathbb{R})$ is the class of continuous real-valued functions defined on $\mathbb{R}$. The result is a consequence of the following properties (cf. Theorem 3 of \cite{BonaCh2002}):

\begin{itemize}
\item[(S1)] The functions \eqref{pot3}  satisfy:
\begin{itemize}
\item[(i)]
$k_{\gamma\beta}(-x)=k_{\gamma\beta}(x), m_{\gamma\beta}(-x)=m_{\gamma\beta}(x),\; x\in\mathbb{R},\; \gamma,\beta=u,v$. 
\item[(ii)] Assume that $a_{\gamma\beta}, b_{\gamma\beta}\geq 0$. Then:
\begin{itemize}
\item $k_{\gamma\beta},m_{\gamma\beta}\geq 0$, they are monotone decreasing on $(0,\infty)$, and are convex when $x\geq 0$.
\item Either $m_{uv}+k_{uv}$ or both $k_{uu}+m_{uu}, k_{vv}+m_{vv}$ are strictly convex when $x\geq 0$ if
\begin{itemize}
\item $a_{uv}, b_{uv}$ do not vanish at the same time or
\item $a_{uu}, b_{uu}$ and $a_{vv}, b_{vv}$ do not vanish at the same time.
\end{itemize}
\end{itemize}
\end{itemize}
\item[(S2)] It is clear that if $(u,v)\in \mathbb{K}$ is a fixed point of (\ref{pot2}), then $u=0\Rightarrow v=0$ and vice-versa. Furthermore, we have:
\begin{lemma}
Let $\mathcal{A}$ be the fixed-point operator defined by \eqref{pot2}. Then there are only finitely many fixed points of $A$ in the cone $\mathbb{K}$ which are
constant functions if there are only finitely many solutions of the algebraic system
\begin{eqnarray}
a_{uu}X^{2}+2a_{uv}XY+a_{vv}Y^{2}-\left(\frac{c_{s}^{2}-\alpha^{2}}{2}\right)X&=&0,\nonumber\\
b_{uu}X^{2}+2b_{uv}XY+b_{vv}Y^{2}-\frac{1}{2}Y&=&0.\label{317b}
\end{eqnarray}
\end{lemma}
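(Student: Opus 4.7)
The strategy is to substitute the constant ansatz $(u,v)\equiv(X,Y)\in\mathbb{K}$ into the fixed-point equations \eqref{pot2} and reduce this to a finite-dimensional algebraic system on $(X,Y)$. Once I show that this reduction produces a system with the same zero set (up to a linear rescaling) as \eqref{317b}, the finiteness assumption on \eqref{317b} will transfer immediately to the set of constant fixed points of $\mathcal{A}$ in $\mathbb{K}$.

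First I would use the elementary fact that the convolution of an integrable kernel $k\in L^1(\mathbb{R})$ with a constant $c$ equals $c\int_{\mathbb{R}} k(x)\,dx$. Applying this to each of the six convolutions in \eqref{pot2} with $(u,v)\equiv(X,Y)$, and using $\int_{\mathbb{R}} e^{-s|x|}\,dx=2/s$ together with the identities $s^{2}c_{s}^{2}=c_{s}^{2}-\alpha^{2}$ and $r^{2}(1-c_{s}^{2})=1$, I would obtain from \eqref{pot3} two integrals of the form
\begin{equation*}
\int_{\mathbb{R}} k_{\gamma\beta}(x)\,dx=\frac{\kappa\, a_{\gamma\beta}}{c_{s}^{2}-\alpha^{2}},\qquad \int_{\mathbb{R}} m_{\gamma\beta}(x)\,dx=\kappa\, b_{\gamma\beta},
\end{equation*}
for a single positive constant $\kappa$ depending only on the Fourier normalization. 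Substituting these into \eqref{pot2} with $(u,v)\equiv(X,Y)$ yields
\begin{equation*}
(c_{s}^{2}-\alpha^{2})X=\kappa\bigl(a_{uu}X^{2}+2a_{uv}XY+a_{vv}Y^{2}\bigr),\qquad Y=\kappa\bigl(b_{uu}X^{2}+2b_{uv}XY+b_{vv}Y^{2}\bigr),
\end{equation*}
which is precisely the system \eqref{317b} after the harmless rescaling $(X,Y)\mapsto(\kappa X/2,\,\kappa Y/2)$ (or with \eqref{317b} rewritten with $\kappa$ replacing the numerical factors involving $1/2$).

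Since such a rescaling is a bijection of $\mathbb{R}^{2}$ onto itself, it preserves the cardinality of the solution set; in particular, the number of constant solutions $(X,Y)\in\mathbb{R}^{2}$ to the system above equals the number of solutions of \eqref{317b}. Restricting to the cone $\mathbb{K}$ only selects the nonnegative constant solutions, which can only decrease this number. Under the hypothesis that \eqref{317b} admits only finitely many solutions, the set of constant fixed points of $\mathcal{A}$ lying in $\mathbb{K}$ is therefore finite.

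The proof is essentially a one-line finite-dimensional reduction; the only mildly delicate point is keeping track of the constant $\kappa$ coming from the Fourier normalization in \eqref{pot3}, but since any such positive constant can be absorbed into a linear rescaling of $(X,Y)$ it plays no role in the counting argument. No compactness, contraction, or positivity tools from the Benjamin--Bona--Bose framework are needed for this particular step, which separates the reduction to constants from the more substantive nontrivial existence statement addressed elsewhere.
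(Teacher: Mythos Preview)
Your proposal is correct and follows essentially the same route as the paper: substitute the constant pair into \eqref{pot2}, evaluate the convolutions using $\int_{\mathbb{R}}e^{-s|x|}dx=2/s$ together with $s^{2}c_{s}^{2}=c_{s}^{2}-\alpha^{2}$ and $r^{2}(1-c_{s}^{2})=1$, and identify the resulting algebraic system with \eqref{317b}. Your explicit handling of the Fourier normalization constant $\kappa$ via rescaling is in fact slightly more careful than the paper's proof, which tacitly drops the $\sqrt{2\pi}$ factor from \eqref{pot3}.
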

\begin{proof}
Note that if $(u_{0},v_{0})$ is a constant fixed point of \eqref{pot2} in $\mathbb{K}$, then
\begin{eqnarray*}
u_{0}&=&\int_{-\infty}^{\infty}\left(u_{0}^{2}k_{11}(y)+u_{0}v_{0}k_{12}(y)+v_{0}^{2}k_{22}(y)\right)dy,\\
v_{0}&=&\int_{-\infty}^{\infty}\left(u_{0}^{2}m_{11}(y)+u_{0}v_{0}m_{12}(y)+v_{0}^{2}m_{22}(y)\right)dy.
\end{eqnarray*}
Using \eqref{pot3}, this can be written as
\begin{eqnarray*}
u_{0}&=&\int_{-\infty}^{\infty}\frac{u_{0}a_{uu}+2u_{0}v_{0}a_{uv}+v_{0}^{2}a_{vv}}{sc_{s}^{2}}e^{-s|y|}dy=\frac{2}{s}\frac{u_{0}a_{uu}+2u_{0}v_{0}a_{uv}+v_{0}^{2}a_{vv}}{sc_{s}^{2}},\\
v_{0}&=&\int_{-\infty}^{\infty}\frac{u_{0}b_{uu}+2u_{0}v_{0}b_{uv}+v_{0}^{2}b_{vv}}{r(1-c_{s}^{2})}e^{-r|y|}dy=\frac{2}{r}\frac{u_{0}b_{uu}+2u_{0}v_{0}b_{uv}+v_{0}^{2}b_{vv}}{r(1-c_{s}^{2})}.
\end{eqnarray*}
Therefore, $X=u_{0}, Y=v_{0}$ is a solution of \eqref{317b}.
\end{proof}
\item[(S3)] Note that
\begin{eqnarray*}
&&k_{uv}+k_{vv}\neq 0\quad {\rm if}\quad 2a_{uv}+a_{vv}\neq 0,\\
&&m_{uv}+m_{vv}\neq 0\quad {\rm if}\quad 2b_{uv}+b_{vv}\neq 0.
\end{eqnarray*}
Furthermore, for $\gamma,\beta=u,v$ let
\begin{eqnarray*}
\kappa_{\gamma\beta}&=&=\int_{0}^{2}k_{\gamma\beta}(x)dx=\frac{a_{\gamma\beta}}{sc_{s}^{2}}\sqrt{2\pi}\left(\frac{1-e^{-2s}}{s}\right),\\
\mu_{\gamma\beta}&=&=\int_{0}^{2}m_{\gamma\beta}(x)dx=\frac{b_{\gamma\beta}}{r(1-c_{s}^{2})}\sqrt{2\pi}\left(\frac{1-e^{-2r}}{r}\right).
\end{eqnarray*}
Let $a>0$.  If $a_{\gamma\beta}, b_{\gamma\beta}\geq 0$, then $\kappa_{\gamma\beta},\mu_{\gamma\beta}\geq 0$ and the system of inequalities
\begin{eqnarray*}
a+\kappa_{uu}\int_{0}^{1}u^{2}(x)dx+2\kappa_{uv}\int_{0}^{1}u(x)v(x)dx+\kappa_{vv}\int_{0}^{1}v(x)^{2}dx&\leq &\left(\int_{0}^{1}u(x)^{2}dx\right)^{1/2},\\
a+\mu_{uu}\int_{0}^{1}u^{2}(x)dx+2\mu_{uv}\int_{0}^{1}u(x)v(x)dx+\mu_{vv}\int_{0}^{1}v(x)^{2}dx&\leq &\left(\int_{0}^{1}v(x)^{2}dx\right)^{1/2},
\end{eqnarray*}
implies that each term on the left-hand side is bounded and these bounds are only
dependent on the quantities $\kappa_{\gamma\beta}, \mu_{\gamma\beta}$.
\end{itemize}
\begin{theorem}
Under the hypotheses on the coefficients $a_{\gamma\beta}, b_{\gamma\beta}, \gamma,\beta=u,v$, stated above to satisfy conditions (S1)-(S3) and \eqref{gm3d}, the system \eqref{pot2} has a nontrivial solution $(u,v)\in\mathbb{K}$ which lies in $H^{\infty}\times H^{\infty}$.
\end{theorem}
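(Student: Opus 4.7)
The plan is to apply a positive-operator fixed-point theorem, patterned on Theorem 3 of \cite{BonaCh2002} and the framework of \cite{BenjaminBB1990}, to the operator $\mathcal{A}:\mathbb{K}\to X$ whose fixed points coincide with the solutions of \eqref{pot2}. The argument has three ingredients: invariance of the cone $\mathbb{K}$, compactness of $\mathcal{A}$, and a Krasnoselskii-type nontriviality step; this will be followed by a standard regularity bootstrap in the Fourier representation \eqref{pot1}.

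First I would verify that $\mathcal{A}(\mathbb{K})\subset\mathbb{K}$. If $(u,v)\in\mathbb{K}$, each of $u^{2}$, $uv$, $v^{2}$ is even, nonnegative and nonincreasing on $[0,\infty)$. By (S1) and the assumption $a_{\gamma\beta},b_{\gamma\beta}\geq 0$, the kernels in \eqref{pot3} are even, nonnegative, nonincreasing and convex on $[0,\infty)$. A standard rearrangement lemma for convolutions of even, nonincreasing functions then yields that each convolution on the right-hand side of \eqref{pot2} inherits these structural properties, while continuity follows from the $L^{1}$ character of the kernels. Complete continuity of $\mathcal{A}$ on bounded subsets of $\mathbb{K}$ is then immediate from Young's inequality, together with the exponential tails in \eqref{pot3} and Arzel\`a-Ascoli, which supply uniform boundedness, equicontinuity and tail control simultaneously.

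Next, for the existence of a nontrivial fixed point, I would invoke the cone compression/expansion theorem of Krasnoselskii. Since $\mathcal{A}$ is purely quadratic, it vanishes to second order at the origin; the explicit bounds on $\kappa_{\gamma\beta},\mu_{\gamma\beta}$ supplied by (S3) prevent $\mathcal{A}$ from having fixed points on large spheres of $\mathbb{K}$, giving the outer step. For the inner step one needs small elements of $\mathbb{K}$ on which $\mathcal{A}$ expands; here the strict convexity guaranteed by (S1)(ii) together with the nonvanishing combinations stipulated in (S3) (namely $2a_{uv}+a_{vv}\neq 0$ or the analogous $b$-condition) ensure that the relevant convolution kernels are genuinely active on the interval $[0,1]$, so suitably scaled test profiles $(\lambda\phi,\lambda\psi)\in\mathbb{K}$ produce the necessary expansion estimate. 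The finiteness hypothesis in (S2), applied to the algebraic system \eqref{317b}, then rules out that the fixed point so obtained collapses onto a trivial constant branch, and yields $(u,v)\in\mathbb{K}\setminus\{(0,0)\}$.

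Finally, for the $H^{\infty}$ regularity I would bootstrap through \eqref{pot1}. From membership in $\mathbb{K}$ and the exponential tails inherited from the kernels, $u,v\in L^{2}\cap L^{\infty}$, so $u^{2},uv,v^{2}\in L^{2}$; the multipliers $1/p_{1}(k),1/p_{2}(k)$ decay like $1/k^{2}$ at infinity (using $1-c_{s}^{2}>0$ and $c_{s}^{2}-\alpha^{2}>0$ from \eqref{gm3d}), so one application of \eqref{pot1} gains two derivatives and places $(u,v)$ in $H^{2}$. Since $H^{s}$ is a Banach algebra for $s>1/2$, iteration upgrades $(u,v)$ to $H^{s+2}$, producing $(u,v)\in H^{\infty}\times H^{\infty}$. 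The main technical obstacle is the inner expansion step: the geometry of the cone and the precise interaction between the convexity conditions in (S1) and the nonvanishing conditions in (S3) must be treated carefully, both to obtain the strict inequality needed on small spheres and to ensure that the produced fixed point is not a constant one from \eqref{317b}.
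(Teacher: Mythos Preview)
The paper does not give a self-contained proof of this theorem at all: it simply verifies that the kernels \eqref{pot3} satisfy the structural hypotheses (S1)--(S3), and then invokes Theorem~3 of \cite{BonaCh2002} as a black box. Your proposal is therefore not so much a different route to the same result as an attempt to reconstruct the interior of that black box. That is a reasonable thing to do, and your cone-invariance, compactness, and $H^{\infty}$ bootstrap paragraphs are essentially correct and match what \cite{BenjaminBB1990,BonaCh2002} actually do.

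The nontriviality step, however, is where your sketch goes off. Your Krasnosel'skii compression/expansion framing is inverted: since $\mathcal{A}$ is homogeneous of degree two, on \emph{small} spheres in $\mathbb{K}$ one has $\|\mathcal{A}(u,v)\|\lesssim\|(u,v)\|^{2}\ll\|(u,v)\|$, i.e.\ compression, not the ``expansion on small elements'' you describe; conversely the quadratic growth forces expansion on large spheres. More importantly, the argument in \cite{BenjaminBB1990,BonaCh2002} is not a straight compression/expansion theorem. It is a fixed-point-index computation in the cone in which (S3) supplies an a~priori bound confining all fixed points to a large ball, (S2) guarantees that the constant fixed points (which lie in $\mathbb{K}$ but not in $L^{2}$) are isolated and finite in number, and the strict convexity in (S1) is used to compute the local index at each constant fixed point and show that the global index does not match their sum, forcing a nonconstant fixed point. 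Your proposal gestures at (S2) only as a way to ``rule out collapse onto a trivial constant branch'' after the fact, whereas in the actual proof the interplay between (S1)(ii) and (S2) is the mechanism that produces the nontrivial fixed point in the first place. If you want a self-contained argument rather than the paper's citation, that index computation is the piece you would need to supply.
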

\begin{remark}
The profile $(u,v)$ will correspond to a CSW solution. The difference with the results in section \ref{nft} concerns the condition \eqref{gm3d}. While the analysis made in section \ref{nft} assumes that $c_{s}^{2}$ is close to $\alpha^{2}$ (thus $\mu>0$ is small), in this case that restriction is not necessary.
\end{remark}
\begin{remark}
Explicit formulas of the solitary waves are in general not known. Some can be derived, under certain hypotheses and from specific forms of the profiles. By way of illustration, consider 
\eqref{kgb} in the case $a_{uv}=b_{uu}=b_{vv}=0$, that is
\begin{eqnarray*} 
			u_{tt}&=&\al^2u_{xx}+u_{ttxx}+\left( a_{uu}u^2+a_{vv}v^2 \right)_{xx}, \label{kgbspecial1} \\
			v_{tt}&=&v_{xx}-v+2b_{uv}uv.   \label{kgbspecial2}
\end{eqnarray*}
The corresponding system \eqref{gm2} has the form


\begin{eqnarray}
      (c_{s}^{2}-\alpha^{2})u-c_{s}^{2}u'' &=& a_{uu}u^{2}+a_{vv}v^{2},  \label{system1}\\
     v-(1-c_{s}^{2})v''&=&2b_{uv}uv. \label{system2}
\end{eqnarray}

We now look for the solutions of the form 
\begin{equation}\label{Ap**1}
u=A_1 {\rm sech}^{2}{(b\xi)},\quad v=A_2 {\rm sech}{(b\xi)},
\end{equation}
for some $b, A_{1}, A_{2}$. Inserting \eqref{Ap**1} into \eqref{system1}, \eqref{system2} yields
%
%
%

\begin{eqnarray}
    && 4A_1b^2 c_s^2+(\alpha^2-c_s^2) A_1+a_{vv}A_2^2 = 0, \label{coefficient1} \\
    && A_1 a_{uu}-6b^2 c_s^2 = 0, \label{coefficient2} \\ 
    && (1-c_s^2)b^2-1 = 0, \label{coefficient3} \\
    && b_{uv}A_1- (1-c_s^2) b^2= 0. \label{coefficient4}
\end{eqnarray}
We solve \eqref{coefficient1}-\eqref{coefficient4}. Assuming $c_{s}^{2}<1$, it holds that
\begin{eqnarray}
&&b^2=\displaystyle \frac{1}{1-c_s^2},\quad a_{uu}=6 (b^2-1) b_{uv},\nonumber\\
&&A_1 = \frac{1}{b_{uv}}, \hspace*{20pt} A_2 = \sqrt{\frac{ c_s^2-\alpha^2 - 4 b^2 c_s^2  }{a_{vv}b_{uv}}}, \label{exact2}
\end{eqnarray}
where \eqref{exact2} requires a choice of the parameters in such a way that $\frac{ c_s^2-\alpha^2 - 4 b^2 c_s^2  }{a_{vv}b_{uv}}>0$. The solitary wave solutions are then given by
%
%
\begin{equation}
    u(x, t) = A_1 \sech^2{\big[ b (x - c_s t)\big]}, \hspace*{20pt} v(x, t) = A_2 \sech{\big[ b (x - c_s t)\big]}. \label{exact1}
\end{equation}

\end{remark}
\section{The KdV approximation for the KGB model}\label{kdv_app}
The last point considered in this paper is concerned with the validity of the KdV approximation for the KGB model, \cite{ChongS,BauerCS}. If we make the ansatz
\begin{eqnarray}
\widetilde{u}(x,t)&=&\epsilon^{2}\psi_{u}^{KdV}(x,t)=\epsilon^{2}A(\epsilon(x-\alpha t),\epsilon^{3}\alpha t),\label{A*1}\\
\widetilde{v}(x,t)&=&\epsilon^{2}\psi_{v}^{KdV}(x,t)=0,\label{A*1b}
\end{eqnarray}
for $\epsilon>0$ small, $A$ some smooth function, going to zero at infinity. Inserting \eqref{A*1}, \eqref{A*1b} into \ref{kgb}, the residuals will satisfy
\begin{eqnarray*}
Res_{u}&=&\epsilon^{6}\left(2\alpha^{2}\partial_{XT}A+\partial_{X}^{4}A+a_{uu}\partial_{X}^{2}(A^{2})\right)+O(\epsilon^{8}),\label{A*2}\\
Res_{v}&=&b_{uu}\epsilon^{4}A^{2}(X,T)+O(\epsilon^{6}),\label{A*2b}
\end{eqnarray*}
where $X=\epsilon(x-\alpha t), T=\epsilon^{3}\alpha t$, and we assume $a_{uu}, b_{uu}>0$. The condition $Res_{u}=O(\epsilon^{8})$ determines, after one integration, the KdV equation for $A$, cf. \cite{Schneider-2020}
\begin{equation}\label{A*3}
2\alpha^{2}\partial_{T}A+\partial_{XXX}A+a_{uu}\partial_{X}(A^{2})=0.
\end{equation}
The proof of the corresponding KdV approximation theorem, \cite{ChongS,BauerCS,Schneider-2020}, requires both residuals at the same elevel of error. This can be done, \cite{ChongS}, modifying \eqref{A*1b} in the form
 \begin{eqnarray*}
\widetilde{v}(x,t)=\epsilon^{4}\psi_{v}^{KdV}(x,t)=\epsilon^{4}B_{1}(\epsilon(x-\alpha t),\epsilon^{3}\alpha t)+\epsilon^{6}B_{2}(\epsilon(x-\alpha t),\epsilon^{3}\alpha t).
\end{eqnarray*}
Now taking
\begin{eqnarray*}
B_{1}=b_{uu}A^{2},\quad B_{2}=(1-\alpha^{2})\partial_{X}^{2}B_{1}+2b_{uv}AB_{1},
\end{eqnarray*}
then, after some calculations, it holds that
\begin{eqnarray}
Res_{u}&=&\epsilon^{8}\left(-\alpha^{2}\partial_{TT}A-2\alpha^{2}\partial_{T}\partial_{X}^{4}A+2a_{uv}\partial_{X}^{2}(AB_{1})\right)+O(\epsilon^{10}),\label{A*5}\\
Res_{v}&=&\epsilon^{8}\left(2\alpha^{2}\partial{TX}B_{1}+(1-\alpha^{2})\partial_{X}^{2}B_{2}+2b_{uv}AB_{2}+b_{vv}B_{1}^{2}\right)+O(\epsilon^{10}).\label{A*5b}
\end{eqnarray}
The estimates \eqref{A*5}, \eqref{A*5b} can be used to analyze the error functions defined by $u=\widetilde{u}+\epsilon^{7/2}R_{u}, v=\widetilde{v}+\epsilon^{7/2}R_{v}$. Arguments based on normal form transformations and energy estimates prove the following approximation result (cf. \cite{Schneider-2020} for a sharper result in the case of unstable resosnances, when $\alpha>2$).
\begin{theorem}
( \cite{ChongS,BauerCS}.) Let $A\in C([0, T_{0}],H^{8})$ be a solution of \eqref{A*3}. Then there exists $\epsilon_{0},C>0$ such that for all $\epsilon\in (0,\epsilon_{0})$ there is a solution $(u,v)$ of \eqref{kgb} satisfying
\begin{eqnarray}
\sup_{t\in [0,T_{0}/\epsilon^{3}]}\sup_{x\in\mathbb{R}}|(u,v)(x,t)-(\epsilon^{2}\psi_{u}^{KdV}(x,t),0)|\leq C\epsilon^{7/2}.\label{A*6}
\end{eqnarray}
\end{theorem}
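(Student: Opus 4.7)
The plan is to run the standard long-wave approximation argument over the extended time interval $[0,T_0/\epsilon^3]$: construct an improved ansatz whose residuals are small to high order, set up equations for the error remainder, and close an energy estimate through a normal form transformation. The first two ingredients are essentially already prepared in the excerpt. Using $\widetilde u = \epsilon^{2}\psi_u^{KdV}$ together with the corrected $\widetilde v = \epsilon^{4}B_1 + \epsilon^{6}B_2$ from \eqref{A*5}, \eqref{A*5b}, both residuals are $O(\epsilon^{8})$ in the relevant Sobolev norms. Writing $u = \widetilde u + \epsilon^{7/2}R_u$ and $v=\widetilde v + \epsilon^{7/2}R_v$, the pair $(R_u,R_v)$ solves a system whose linear part is the KGB linearization, whose forcing is the rescaled residuals of size $\epsilon^{8-7/2} = \epsilon^{9/2}$, and whose quadratic interactions come in two flavours: mixed terms with $\widetilde u, \widetilde v$ of size $\epsilon^{2}$, and self-interactions of size $\epsilon^{7/2}$. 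Local well-posedness from Theorem \ref{local} guarantees that, for each sufficiently small $\epsilon$, $(R_u,R_v)$ exists on some maximal interval $[0,T_\epsilon)$, and the goal is to show $T_\epsilon \geq T_0/\epsilon^{3}$ by a bootstrap.

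The core difficulty is the one signalled in the introduction: since the $u$-equation carries only the weak dispersion $\omega_1(\xi)=|\alpha\xi|/\sqrt{1+\xi^{2}}$, no Strichartz smoothing is available, and applying a naive Gronwall estimate to a forcing of quadratic order $\epsilon^{2}$ would produce a growth factor $e^{C/\epsilon}$ on the time scale $\epsilon^{-3}$, destroying the gain. To circumvent this, I would apply a near-identity normal form change of variables
\begin{equation*}
(R_u,R_v) = (\mathcal R_u,\mathcal R_v) + \epsilon^{2}M\bigl[(\widetilde u,\widetilde v),(\mathcal R_u,\mathcal R_v)\bigr],
\end{equation*}
where $M$ is a bilinear Fourier multiplier chosen so that its symbol cancels exactly the dangerous $O(\epsilon^{2})$ quadratic source terms. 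Solving for $M$ in Fourier variables produces denominators of the form $\omega_j(\xi)-\omega_k(\xi-\eta)\mp \alpha\eta$ with $j,k\in\{1,2\}$, where $\omega_{2}(\xi)=\sqrt{1+\xi^{2}}$; these must be bounded away from zero on the relevant frequency set, which here is controlled because $\eta$ is concentrated near zero by the long-wave scaling of $\widetilde u$. This is precisely the non-resonance condition alluded to in the introduction, and the case $\alpha>2$ produces unstable resonances on the Klein--Gordon branch which require the refinement of \cite{Schneider-2020}.

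With the transformation in place the system for $(\mathcal R_u,\mathcal R_v)$ has only cubic nonlinearities, forcing of order $\epsilon^{9/2}$, and harmless quadratic remainders of order $\epsilon^{3}$. A natural energy, obtained by adapting the conserved functional $E(t)$ of Theorem \ref{hamiltonian} to the linearization around $(\widetilde u,\widetilde v)$, then yields a differential inequality of the type
\begin{equation*}
\frac{d}{dt}\|(\mathcal R_u,\mathcal R_v)\|_{X^r\times X^s}^{2} \leq C\epsilon^{3}\|(\mathcal R_u,\mathcal R_v)\|_{X^r\times X^s}^{2} + C\epsilon^{7/2}\|(\mathcal R_u,\mathcal R_v)\|_{X^r\times X^s}^{3} + C\epsilon^{9/2}\|(\mathcal R_u,\mathcal R_v)\|_{X^r\times X^s}.
\end{equation*}
A bootstrap argument on $[0,T_0/\epsilon^{3}]$ produces a uniform-in-$\epsilon$ bound on $\|(\mathcal R_u,\mathcal R_v)\|$, which transfers to $\|(R_u,R_v)\|$ via the invertibility of the near-identity transformation, and Sobolev embedding $H^{r}\hookrightarrow L^{\infty}$ for $r>1/2$ converts this into the pointwise estimate \eqref{A*6}. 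The main obstacle in the plan is the normal form step: verifying the non-resonance denominators quantitatively across all interactions, and proving that $M$ defines a bounded bilinear operator on the scale of spaces used for the energy, so that the transformation is invertible and does not spoil the long-time estimate.
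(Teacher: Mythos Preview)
The paper does not actually supply its own proof of this theorem: it is stated as a citation of \cite{ChongS,BauerCS}, preceded only by the remark that ``Arguments based on normal form transformations and energy estimates prove the following approximation result (cf.\ \cite{Schneider-2020} for a sharper result in the case of unstable resonances, when $\alpha>2$).'' Your sketch follows precisely that indicated route---improved ansatz giving $O(\epsilon^{8})$ residuals, error functions $R_u,R_v$ at scale $\epsilon^{7/2}$, a normal form transformation to eliminate the $O(\epsilon^{2})$ quadratic interactions subject to the non-resonance condition (with the caveat on $\alpha>2$ you correctly flag), and a Gronwall/bootstrap energy argument on $[0,T_0/\epsilon^{3}]$---so there is nothing further to compare.
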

We can illustrate \eqref{A*6} in the case of a KdV approximation $\epsilon^{2}\psi_{u}^{KdV}(x,t)$ given by a soliton solution  $A(X,T)=\widetilde{A}(X-cT), c>0$ of \eqref{A*3}, which satisfies
\begin{eqnarray*}
\frac{\alpha^{2}}{a_{uu}}c\widetilde{A}'-\frac{1}{2a_{uu}}\widetilde{A}'''= \widetilde{A}\widetilde{A}',
\end{eqnarray*}
and has the form, \cite{MChen1998}
\begin{eqnarray*}
\widetilde{A}(\xi)=3a{\rm sech}^{2}\left(\frac{1}{2}\sqrt{\frac{a}{b}}(\xi)\right),\quad a=\frac{\alpha^{2}}{a_{uu}}c, b=\frac{1}{2a_{uu}}, \xi=X-cT.
\end{eqnarray*}
This leads to
\begin{eqnarray}
\epsilon^{2}\psi_{u}^{KdV}(x,t)=\epsilon^{2}\frac{3c\alpha^{2}}{a_{uu}}{\rm sech}^{2}\left(|\alpha|\sqrt{\frac{c}{2}}(\epsilon(x-\alpha t)-c\epsilon^{3}\alpha t)\right).\label{KdVap}
\end{eqnarray}
Taking $c=0.8, \alpha=1, a_{\gamma\beta}=b_{\gamma\beta}=1, \gamma,\beta=u,v$, and the initial conditions
\begin{eqnarray*}
&&u_{0}(x)=\epsilon^{2}\psi_{u}^{KdV}(x,0),\quad u_{1}(x)=\epsilon^{2}\partial_{t}\psi_{u}^{KdV}(x,0),\\
&&v_{0}(x)=v_{1}(x)=0,
\end{eqnarray*}
the corresponding ivp \eqref{kgb}, \eqref{kgb1} was numerically integrated with an efficient, high-order numerical method, \cite{DDS1}, up to a final time $T=1000$ and for several values of $\epsilon$. The resulting numerical solution was compared with $(\epsilon^{2}\psi_{u}^{KdV}(x,t),0)$. The maximum norm (in $x$) for the difference was measured at several times and the comparison is shown (in semilog scale) in Figure \ref{KdVapp_FIG1}.
\begin{figure}
  \centering
  {\includegraphics[width=0.8\textwidth]{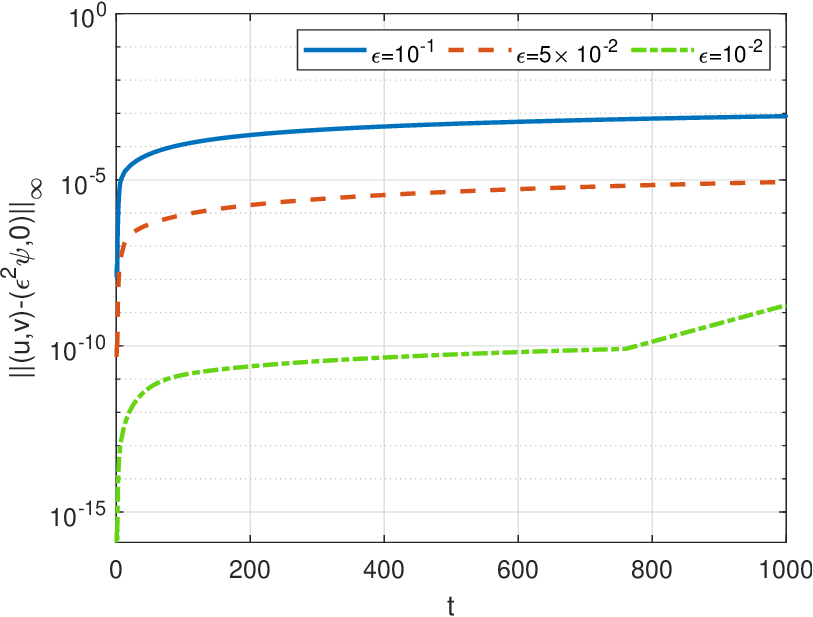}}
  \caption{ Time evolution of the maximum norm of the difference between the numerical solution and $(\epsilon^{2}\psi_{u}^{KdV},0)$ given by \eqref{KdVap} for $c=0.8, \alpha=1, a_{\gamma\beta}=b_{\gamma\beta}=1, \gamma,\beta=u,v$ and several values of $\epsilon$. Semilog scale.}
  \label{KdVapp_FIG1}
\end{figure}
The results suggest that, for the values of $\epsilon$ considered and up to the final time of integration, the errors are $O(\epsilon^{7/2})$ and bounded in time, as established in \eqref{A*6}.  For the case $\epsilon=5\times 10^{-2}$, Figure \ref{KdVapp_FIG2} shows the time behaviour of the numerical approximation to the solution $(u,v)$. Note that the $u$ component seems to evolve, up to the computed final time, as a solitary wave. The preservation of this behaviour for longer times will depend, according to \eqref{A*6} and Figure \ref{KdVapp_FIG1}, on the growth with time of the $O(\epsilon^{7/2})$ remainder terms.

\begin{figure}
  \centering
  \subfigure[]
  {\includegraphics[width=0.49\textwidth]{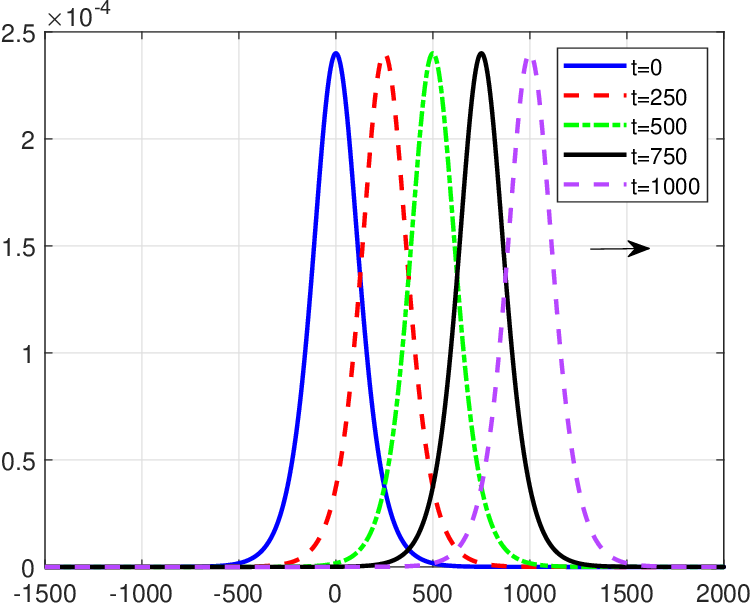}}
  \subfigure[]
  {\includegraphics[width=0.49\textwidth]{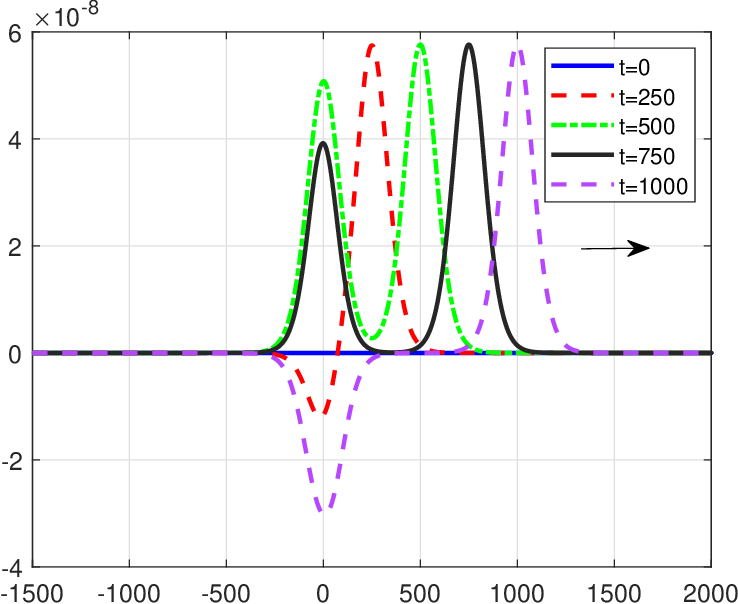}}
  \caption{Time behaviour of the approximate solution of \eqref{kgb}, \eqref{kgb1} with initial data $(\epsilon^{2}\psi_{u}^{KdV},0)$ given by \eqref{KdVap} for $\epsilon=5\times 10^{-2}, c=0.8, \alpha=1, a_{\gamma\beta}=b_{\gamma\beta}=1, \gamma,\beta=u,v$. (a) $u$ component; (b) $v$ component}
  \label{KdVapp_FIG2}
\end{figure}

\section*{Acknowledgments}
 A. E. is supported by the Nazarbayev University under Faculty Development Competitive Research Grants Program  for 2023-2025 (grant number 20122022FD4121). A. D. is supported by the Spanish Agencia Estatal de Investigaci\'on under
Research Grant PID2023-147073NB-I00.

 \section*{Conflict of interests} The author  has no conflicts of interest to declare.
	 
	 \section*{Data availability statement} 
	 Data sharing is not applicable to this article as no new data were created or analyzed in this study.

\appendix

\section{Numerical generation of solitary waves}
\label{appA}

The numerical method to generate approximate solitary wave solutions of \eqref{kgb}, used in section \ref{solit-sec}, is described here.
The system \eqref{gm2} is discretized on
a long enough interval $(-L, L)$ and with periodic boundary conditions by the Fourier
collocation method based on $N$ collocation points $x_j = -L + jh, j = 0, . . . , N-1$ 
for an even integer $N \geq 1$. The vectors
$U = (U_0, . . . , U_{N-1})^T$  and $V = (V_0, . . . , V_{N-1})^T$
 denote, respectively, the
approximations to the values of $u$ and $v$.  The system \eqref{gm2} is implemented in the Fourier space, that is, for
the discrete Fourier components of $U$ and $V$, leading to  $2\times 2$ systems 
\begin{equation}    \label{sys1}
	S(k)
	\left(	\begin{array}{c}
		\hat{U}(k) \\
		
		\hat{V}(k) \\
	\end{array} \right )
	=\left(	\begin{array}{c}
		\widehat{f_1(U,V)}(k) \\
		
		\widehat{f_2(U,V)}(k)\\
	\end{array} \right ),
\end{equation}
where 
\begin{equation*}
    S(k)=\left(	\begin{array}{cc}
		c_s^2-\alpha^2+c_s^2k^2	&  0 \\
		
		0	& 1+(1-c_s^2)k^2 \\
	\end{array} \right ),
\end{equation*}
for  each Fourier component
component  $-\frac{N}{2} \le k \le \frac{N}{2}-1$. 
It can be seen that the matrix  $S(k)$  is nonsingular for $c{s}^{2}\neq \frac{\alpha^{2}}{2k^{2}+1}$. In such case, the iterative resolution of \eqref{sys1} with the classical fixed point  algorithm
\begin{equation*}    \label{sys2}
	\left(	\begin{array}{c}
		\hat{U}^{n+1}(k) \\
		
		\hat{V}^{n+1}(k) \\
	\end{array} \right )=
	\left(	\begin{array}{cc}
		c_s^2-\alpha^2+c_s^2k^2	&  0 \\
		
		0	& 1+(1-c_s^2)k^2 \\
	\end{array} \right )^{-1}
	\left(	\begin{array}{c}
		\widehat{f_1(U^n,V^n)}(k) \\
		
		\widehat{f_2(U^n,V^n)}(k)\\
	\end{array} \right ),\quad n=0,1,\ldots,
\end{equation*}
is typically divergent, \cite{AlvarezD2014}. This can be overcome by inserting a stabilizing factor of the form
\begin{eqnarray} \label{Sn} \nonumber
	{M}_{n}=\frac{\left<	\left(	\begin{array}{cc}
		c_s^2-\alpha^2+c_s^2k^2	&  0 \\
		
		0	& 1+(1-c_s^2)k^2 \\
	\end{array} \right) \left(	\begin{array}{c}
			\hat{U}^{n} \\
			
			\hat{V}^{n} \\
		\end{array} \right ), \left(	\begin{array}{c}
			\hat{U}^{n} \\
			
			\hat{V}^{n} \\
		\end{array} \right ) \right >}
	{\left<	\left(	\begin{array}{c}
			\widehat{f_1(U^n,V^n)}(k) \\
		
		\widehat{f_2(U^n,V^n)}(k)\\
		\end{array} \right ), \left(	\begin{array}{c}
				\hat{U}^{n} \\
			
			\hat{V}^{n} \\
		\end{array} \right ) \right >},\quad n=0,1,\ldots,
	\end{eqnarray}
where $<\cdot,\cdot>$ is the Euclidean inner product in $\mathbb{C}^{2N}$, leading to the Petviashvili method, \cite{Petv1976,pelinovskys}

\begin{equation}   \label{petv} 
	\left(	\begin{array}{c}
		\hat{U}^{n+1}(k) \\
		
		\hat{V}^{n+1}(k) \\
	\end{array} \right )=(M_n)^{2}
	\left(	\begin{array}{cc}
		c_s^2-\alpha^2+c_s^2k^2	&  0 \\
		
		0	& 1+(1-c_s^2)k^2 \\
	\end{array} \right )^{-1}
	\left(	\begin{array}{c}
		\widehat{f_1(U^n,V^n)}(k) \\
		
		\widehat{f_2(U^n,V^n)}(k)\\
	\end{array} \right ),\quad n=0,1,\ldots
\end{equation}
%
%
%
%
%
%
\begin{figure}[htbp]
  \centering
  \subfigure[]
  {\includegraphics[width=0.49\textwidth]{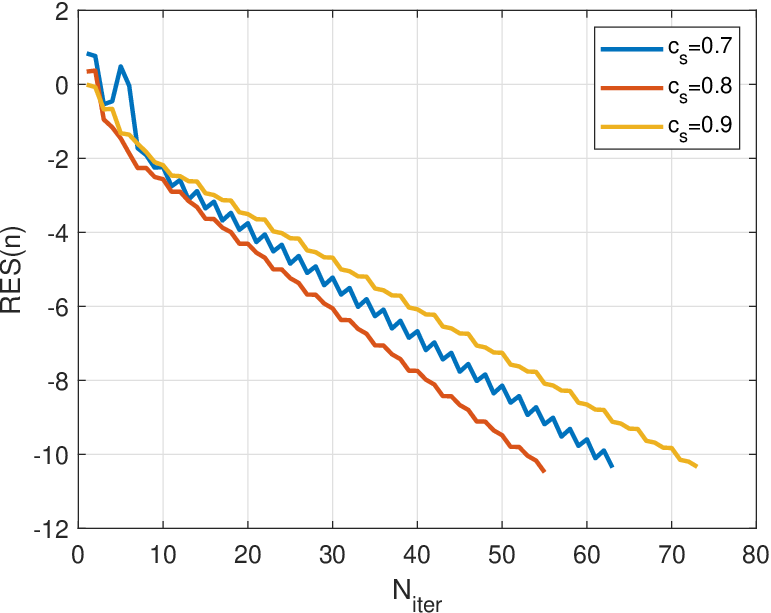}}
  \subfigure[]
  {\includegraphics[width=0.49\textwidth]{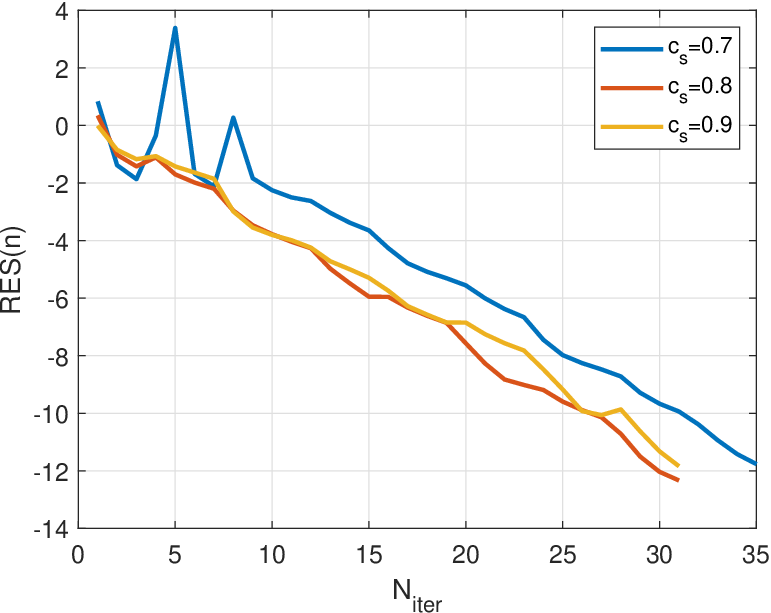}}
  \caption{Residual error generated by (a) \eqref{petv} and (b) \eqref{petv} with extrapolation, for the experiment in Figure \ref{Fig:gm1}.}
 \label{initial}
\end{figure}

The iterative process \eqref{petv} is controlled by the residual error
\begin{equation}\label{resi}
{RES(n)}=  \norm{ \left(	\begin{array}{cc}
		c_s^2-\alpha^2+c_s^2k^2	&  0 \\
		
		0	& 1+(1-c_s^2)k^2 \\
	\end{array} \right) \left(	\begin{array}{c}
			\hat{U}^{n} \\
			
			\hat{V}^{n} \\
		\end{array} \right )
      -\left(	\begin{array}{c}
			\widehat{f_1(U^n,V^n)}(k) \\
		
		\widehat{f_2(U^n,V^n)}(k)\\
		\end{array} \right )
  }_2,\quad n=0,1,\ldots ,
\end{equation}
and it can be  complemented by extrapolation techniques, which may accelerate its convergence,
\cite{sidi1}, \cite{sidi2}, \cite{sidi3}.

The accuracy of the computed profiles is checked in Figure \ref{initial}, which shows the behaviour of the residual error \eqref{resi} as function of the number of iterations, for the waves computed in Figure  \ref{Fig:gm1}. Figure \ref{initial} corresponds to the application of the Petviashvili method \eqref{petv}, while in Figure \ref{initial}(b) the iteration is accelerated with an extrapolation technique.

\end{document}